\newtheorem{theorem}{Theorem}[section]
\newtheorem{lemma}[theorem]{Lemma}
\newtheorem{proposition}[theorem]{Proposition}
\newtheorem{corollary}[theorem]{Corollary}
\theoremstyle{definition}
\newtheorem{definition}[theorem]{Definition}
\newtheorem{examples}[theorem]{Examples}
\newtheorem{remark}[theorem]{Remark}
\numberwithin{equation}{section}
\newcommand{\s}{\mathrm{sign\,}}
\newcommand{\adj}{\mathrm{adj\,}}
\def\geq{\geqslant}
\def\leq{\leqslant}
\numberwithin{equation}{section}
\begin{document}
\title{Currents carried by the  subgradient graphs  of semi-convex functions and applications to Hessian measure}
\author{Qiang Tu\and Wenyi Chen\and
School of Mathematics and Statistics, Wuhan University
Wuhan 430072, China\footnote{\emph{Email addresses}:~qiangtu@whu.edu.cn(Qiang Tu), wychencn@whu.edu.cn(Wenyi Chen).}.
}
\date{}   
\maketitle
\noindent{\bf Abstract:}  In this paper we study   integer multiplicity rectifiable currents
carried by the subgradient (subdifferential) graphs   of  semi-convex functions on a $n$-dimensional convex domain,
and show a weak continuity theorem with respect to pointwise convergence for such currents.
As an application, the $k$-Hessian measures are calculated by a different method in terms of currents.
\medskip

\noindent{\bf Key words:}  Semi-convex function; subgradient;  Cartesian current; Hessian measure.
\medskip

\noindent{\bf 2010 MR Subject Classification:} 28A75, 49Q15, 52A41, 53C65.

\section{Introduction and main results}

This paper is devoted to the study of some  properties and applications of the  subgradient (or subdifferential) graphs of  semi-convex  functions defined on a convex domain
 $\Omega \subset \mathbb{R}^n$.
 Let $\mathcal{L}^n$ and $\mathcal{H}^n$ be the $n$-dimensional Lebesgue and Hausdorff measure as usual.

Following the Cartesian current theory, Giaquinta-Modica-Sou\v{c}ek \cite{GMS} introduced  a class of functions $u\in L^1(\Omega, \mathbb{R}^n)$, named $\mathcal{A}^1(\Omega, \mathbb{R}^n)$,  such that $u$ is approximately differentiable a.e. and all minors of the Jacobian matrix $Du$ are summable in $\Omega$.
For $u\in \mathcal{A}^1(\Omega,\mathbb{R}^n)$, it is well defined an integer multiplicity (i.m.)
rectifiable current $G_{u}$ carried by the  rectifiable graph of $u$. More precisely,
$$G_{u}=\tau(\mathcal{G}_{u,\Omega},1,\xi_{u}).$$
The unit $n$-vector $\xi_{u}(x,u(x))=\frac{M(Du(x))}{|M(Du(x))|}$ given at each point $(x,u(x))\in \mathcal{G}_{u,\Omega}$ provides an orientation to the approximate tangent space $Tan^n(\mathcal{G}_{u,\Omega},x)$. And the  rectifiable graph of $u$ is given by
$$\mathcal{G}_{u,\Omega}=\{(x,u(x))  ~|~ x \in \mathcal{L}_u\cap A_D(u) \cap \Omega \},$$
where $\mathcal{L}_u$ is the set of Lebesgue points and $A_D(u)$ is the set of approximate differentiability points of $u$,
 for more details see \cite[Vol. I, Sect. 3.2.1]{GMS}.
Moreover the area of  $\mathcal{G}_{u,\Omega}$ is equal to the mass of $G_{u}$, i.e.,
$$\mathcal{H}^n(\Gamma_{u,\Omega})=\int_{\Omega}|M(Du(x))|dx=\mathbf{M}(G_u).$$

In the sequel we study the properties of subgradient $\partial w$ of
a semi-convex function $w$ in terms of Cartesian currents.
The initial motivation of our work is the following: Alberti-Ambrosio \cite{AA} studied  some analytical properties of monotone set-valued maps defined on $\mathbb{R}^n$,
defined $n$-currents on $\mathbb{R}^n\times\mathbb{R}^n$ for maximal monotone maps on $\mathbb{R}^n$ and gave some continuity and approximation results for such currents.
As we know, an important class of maximal monotone maps is represented by the subgradients of convex functions.
A natural problem is raised whether we can extend the definitions and results of current for $\mathcal{A}^1(\Omega, \mathbb{R}^n)$ and maximal monotone maps to the subgradients of semi-convex functions (denoted by $W(\Omega)$).
Here we try to discuss this problem.
 More precisely,
we define an i.m. rectifiable current $G_{\partial w}$ carried by the subgradient graph (denoted by $\Gamma_{\partial w,\Omega}$) of $w$  such that the current has zero boundary and the orientation  in ``nonvertical parts" is consistent with the one given in the class $\mathcal{A}^1(\Omega,\mathbb{R}^n)$ a.e. The following is our first main result.
\begin{theorem} \label{thm1}
If $w \in W(\Omega)$ and  a single-valued map $f:\Omega\rightarrow\mathbb{R}^n$ such that $f(x)\subset \partial w(x)$ for any $x\in \Omega$. Then there exists an i.m. rectifiable current $G_{\partial w}$ such that:
\begin{enumerate}
\item[{\em (\romannumeral1)}]  $G_{\partial w }=\tau(\Gamma_{\partial w,\Omega},1,\xi)\in \mathcal{R}_{n,loc}(\Omega \times \mathbb{R}^n)$.
\item[{\em (\romannumeral2)}]  $\xi(x,f(x))=\frac{M(Df(x))}{|M(Df(x))|}$ for $\mathcal{L}^n~ a.e.~ x\in \Omega$, and $G_{Dw}=G_{\partial w}$ when $w\in C^2$.
\item [{\em (\romannumeral3)}] $\partial G_{\partial w} \llcorner \Omega \times \mathbb{R}^n =0$ and $\mathcal{H}^n(\Gamma_{\partial w, B})=\mathbf{M}_{B \times \mathbb{R}^n}(G_{\partial w})$ for any Borel set $B\subset\subset \Omega$.
\end{enumerate}
\end{theorem}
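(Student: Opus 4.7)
The plan is to reduce to the convex case and construct the current by smooth approximation, using the Alberti--Ambrosio rectifiability theorem for graphs of maximal monotone maps as the key structural input. Since $w\in W(\Omega)$ is $c$-semi-convex for some $c\geq 0$, the function $\tilde{w}(x):=w(x)+\frac{c}{2}|x|^2$ is convex on $\Omega$ and satisfies $\partial\tilde{w}(x)=\partial w(x)+cx$. The smooth diffeomorphism $\Phi(x,p):=(x,p+cx)$ of $\mathbb{R}^n\times\mathbb{R}^n$ carries $\Gamma_{\partial w,\Omega}$ bijectively onto $\Gamma_{\partial\tilde{w},\Omega}$ with unit Jacobian, so it suffices to prove the theorem for the convex function $\tilde{w}$ and then pull the resulting current back by $\Phi^{-1}$.

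For convex $\tilde{w}$ I mollify to obtain smooth convex approximations $\tilde{w}_\varepsilon\in C^\infty(\Omega_\varepsilon)$ on the exhaustion $\Omega_\varepsilon:=\{x\in\Omega:\mathrm{dist}(x,\partial\Omega)>\varepsilon\}$, with $\tilde{w}_\varepsilon\to\tilde{w}$ locally uniformly and $D\tilde{w}_\varepsilon\to D\tilde{w}$ $\mathcal{L}^n$-almost everywhere. Each $G_{D\tilde{w}_\varepsilon}$ is the graph current of the smooth map $D\tilde{w}_\varepsilon$: a single-sheeted i.m.\ rectifiable current with zero boundary in $\Omega_\varepsilon\times\mathbb{R}^n$ and local mass $\int|M(D^2\tilde{w}_\varepsilon)|\,dx$. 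Because $D^2\tilde{w}_\varepsilon\geq 0$, its minors are dominated by the elementary symmetric functions of its eigenvalues; integration by parts against compactly supported test fields then delivers locally uniform mass bounds (essentially the local finiteness of the Monge--Amp\`ere-type measures $\sigma_k(D^2\tilde{w}_\varepsilon)\,dx$). By Federer--Fleming compactness a subsequence converges weakly to some $\tilde{G}\in\mathcal{R}_{n,\mathrm{loc}}(\Omega\times\mathbb{R}^n)$ with $\partial\tilde{G}\llcorner(\Omega\times\mathbb{R}^n)=0$.

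Next I identify $\tilde{G}$. Local uniform convergence $\tilde{w}_\varepsilon\to\tilde{w}$ forces graph convergence of $\partial\tilde{w}_\varepsilon$ to $\partial\tilde{w}$ in the sense of maximal monotone maps, so $\mathrm{spt}(\tilde{G})\subset\Gamma_{\partial\tilde{w},\Omega}$. By Alberti--Ambrosio $\Gamma_{\partial\tilde{w},\Omega}$ is $n$-rectifiable, so $\tilde{G}=\tau(\Gamma_{\partial\tilde{w},\Omega},\theta,\tilde{\xi})$ for some integer multiplicity $\theta$ and orientation $\tilde{\xi}$. At any Alexandrov point $x$ (a full-measure subset by Alexandrov's theorem), blow-up of $\Gamma_{\partial\tilde{w},\Omega}$ at $(x,D\tilde{w}(x))$ produces the affine graph of $D^2\tilde{w}(x)$, whose unit $n$-vector is $M(D^2\tilde{w}(x))/|M(D^2\tilde{w}(x))|$. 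Since $\Phi^{-1}$ transforms $D^2\tilde{w}(x)$ into $D^2w(x)=Df(x)$ at a.e.\ point, this recovers the orientation formula in (ii). Comparing $\tilde{G}$ with the limiting mass of $G_{D\tilde{w}_\varepsilon}$ shows $\theta\equiv 1$ $\mathcal{H}^n$-almost everywhere on $\Gamma_{\partial\tilde{w},\Omega}$, whence the mass identity of (iii) follows. The $C^2$ case is immediate because then $\partial w=\{Dw\}$ and the mollifications stabilize.

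The principal obstacle is the \emph{vertical part} of $\Gamma_{\partial\tilde{w},\Omega}$: the set of $(x,p)$ at which $\partial\tilde{w}(x)$ is multi-valued projects onto a Lebesgue-null set in $\Omega$ yet can carry positive $\mathcal{H}^n$-measure in $\Omega\times\mathbb{R}^n$, and there $Df$ is undefined so no single selection controls the orientation via an area formula. One must verify that the smooth approximations $G_{D\tilde{w}_\varepsilon}$ interpolate across these vertical jumps with orientations that fit coherently in the limit, so that no spurious boundary arises in $\Omega\times\mathbb{R}^n$ and the multiplicity remains $1$ throughout. The essential tool will be the Alberti--Ambrosio representation of maximal monotone graphs as countable unions of Lipschitz $n$-graphs (after suitable rotations), which simultaneously furnishes rectifiability, a canonical orientation compatible with the approximation, and multiplicity $1$ on the vertical parts.
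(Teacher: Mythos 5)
The core of the theorem is not the existence of \emph{some} boundary-free rectifiable current supported in $\Gamma_{\partial w,\Omega}$ (which your mollification/Federer--Fleming scheme plausibly delivers), but the identification: multiplicity $\theta\equiv 1$, a well-defined orientation with the correct sign also on the vertical parts where $\partial \tilde w$ is multivalued, and the mass identity $\mathbf{M}_{B\times\mathbb{R}^n}(G_{\partial w})=\mathcal{H}^n(\Gamma_{\partial w,B})$. These are exactly the points you leave open. Lower semicontinuity of mass under weak convergence gives only $\mathbf{M}(\tilde G\llcorner B\times\mathbb{R}^n)\le\liminf_\varepsilon\int |M(D^2\tilde w_\varepsilon)|\,dx$, so ``comparing with the limiting mass'' cannot rule out $\theta\ge2$ on the vertical parts unless you also prove a matching upper bound of the smooth graph areas by $\mathcal{H}^n(\Gamma_{\partial\tilde w,\Omega})$, i.e.\ convergence in area --- a statement you neither prove nor cite, and which is not automatic for uniformly Lipschitz reparametrizations. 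Likewise the blow-up at Alexandrov points only identifies the approximate tangent plane (and misses the vertical set entirely); it does not fix the sign of $\xi$, which must come from a positivity argument tied to monotonicity. You explicitly flag all of this as ``the principal obstacle'' and say the essential tool ``will be'' the Alberti--Ambrosio representation of the graph as a Lipschitz graph after a rotation --- but that representation is not an auxiliary tool here, it \emph{is} the paper's proof: $G_{\partial w}$ is defined directly as $\Theta_\sharp G_u$ for a rotation by an angle $\theta<\theta_0=\arccos\frac{2c}{\sqrt{1+4c^2}}$ adapted to the semiconvexity constant, with $u$ the resulting Lipschitz parametrization, and the real work (Lemmas 3.1--3.3 and Theorem 3.4) consists in showing that the tangential Jacobian of $\Theta$ never vanishes on $\Gamma_{u,D}$ and that $\det(\cos\theta I-\sin\theta Du)\ge0$, which simultaneously yields multiplicity one, the orientation formula at points where $Df$ exists, the mass identity, boundarylessness, and independence of the rotation. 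Since your proposal defers precisely these steps, it is a plan rather than a proof; the mollification detour adds nothing toward them.

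A secondary inaccuracy: the shear $\Phi(x,p)=(x,p+cx)$ has unit Jacobian as a map of $\mathbb{R}^{2n}$, but it does not preserve $n$-dimensional Hausdorff measure on $n$-dimensional graphs, so the mass identity in (iii) does not transfer from $\tilde w$ to $w$ ``by unit Jacobian''. The reduction to the convex case is legitimate, but to transfer (i)--(iii) you must check that the tangential Jacobian of $\Phi^{-1}$ is nonzero $\mathcal{H}^n$-a.e.\ on $\Gamma_{\partial\tilde w,\Omega}$ and that $\Phi^{-1}{}_\sharp$ of a multiplicity-one current carried by $\Gamma_{\partial\tilde w,\Omega}$ is again multiplicity one and carried by $\Gamma_{\partial w,\Omega}$ (after which the mass identity is automatic, being the $\mathcal{H}^n$-measure of the carrier); this nondegeneracy check is of the same nature as the rotation lemmas in the paper and cannot be waved through.
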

It turns out that $G_{\partial w}$ is the push-forward of an i.m. rectifiable current under a rotation transformation. However this quantity is well-defined by the fact that the current is  independent of the choice of rotation transformations. For more details, see Theorem \ref{thm32}.

An important problem is to characterize the Cartesian currents $T \in D_n(\Omega \times \mathbb{R}^n)$  for which there is a sequence of smooth maps $u_k:  \Omega \rightarrow \mathbb{R}^n$ such that
$$G_{u_k} \rightharpoonup T,~~~~\mathbf{M}(G_{u_k})\rightarrow \mathbf{M}(T).$$ %
Such a question is connected with the problem of relaxation of the area integral for nonparametric graphs which is discussed by Giaquinta-Modica-Sou\v{c}ek in \cite{GMS2}, \cite [Vol. II , Ch. 6]{GMS}. 
Then Mucci made efforts to investigate the problem and showed that functions satisfying some certain conditions can be approximated weakly in the sense of currents and in area by graphs of smooth map (see \cite{MD1,MD2,MD3,MD4}).
 It motivates us to focus on the question
wether $G_{\partial u}$ can be approximated by smooth currents.
In view of this, we show the following weak continuity theorem in current sense for semi-convex functions.

\begin{theorem}[\bf{Weak continuity theorem }]\label{thm2}
 If $w,w_k \in W(\Omega,c)$ such that $w_k \rightarrow w$ pointwise in $\Omega$ as $k \rightarrow \infty$,
then $G_{\partial w_k} \rightharpoonup G_{\partial w}$ in $\mathcal{D}^n(\Omega \times \mathbb{R}^n)$.
\end{theorem}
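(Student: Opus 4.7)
The strategy is to reduce the semi-convex case to the convex case, then use the rotation trick already employed to define $G_{\partial w}$ in Theorem \ref{thm1} to re-express the subgradient graph currents as graph currents of $1$-Lipschitz maps, for which weak continuity under locally uniform convergence is classical.

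\emph{Step 1: Reduction to the convex case.} I set $\tilde{w}_k(x) := w_k(x) + \tfrac{c}{2}|x|^2$ and $\tilde{w}(x) := w(x) + \tfrac{c}{2}|x|^2$. By the definition of $W(\Omega,c)$, the functions $\tilde{w}_k$ and $\tilde{w}$ are convex on $\Omega$ and $\tilde{w}_k \to \tilde{w}$ pointwise. Their subgradients satisfy $\partial \tilde{w}_k(x) = \partial w_k(x) + cx$, so the subgradient graphs in $\Omega \times \mathbb{R}^n$ are related by the global smooth diffeomorphism $\Psi(x,y)=(x,y+cx)$. Hence $G_{\partial w_k} = \Psi^{-1}_{\#} G_{\partial \tilde w_k}$, and it suffices to prove $G_{\partial \tilde{w}_k} \rightharpoonup G_{\partial \tilde{w}}$.

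\emph{Step 2: Locally uniform convergence of $\tilde{w}_k$.} A classical result for convex functions says that pointwise convergence on a dense subset of an open convex set forces uniform convergence on every compact subset of the interior. Thus $\tilde{w}_k \to \tilde{w}$ locally uniformly in $\Omega$, and the subgradients $\partial \tilde{w}_k$ are uniformly locally bounded: for every compact $K \subset \Omega$ there exists $L_K$ such that $|p|\leq L_K$ for every $p\in\partial\tilde{w}_k(x)$, $x\in K$, and every sufficiently large $k$.

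\emph{Step 3: Pass to the Lipschitz picture.} Following the construction behind Theorem \ref{thm1} (and its companion Theorem \ref{thm32}), consider the orthogonal rotation $R$ of $\mathbb{R}^n\times\mathbb{R}^n$ given by $R(x,y)=\bigl((x+y)/\sqrt{2},(y-x)/\sqrt{2}\bigr)$. Since the subgradient of a convex function is a maximal monotone map, $R(\Gamma_{\partial \tilde{w}_k,\Omega})$ is the graph of a $1$-Lipschitz map $F_k$ defined on an open set $U_k\subset\mathbb{R}^n$, and likewise $R(\Gamma_{\partial \tilde{w},\Omega})$ is the graph of a $1$-Lipschitz $F$ on an open set $U$. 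By the construction of $G_{\partial \tilde w}$, we have $G_{\partial \tilde{w}_k} = R^{-1}_{\#} G_{F_k}$ and $G_{\partial \tilde{w}} = R^{-1}_{\#} G_{F}$, so it is enough to show $G_{F_k} \rightharpoonup G_F$.

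\emph{Step 4: Convergence in the Lipschitz picture.} Using Step 2 together with the monotonicity of $\partial\tilde{w}_k$, one first shows that on compact subsets of $\Omega$ the graphs $\Gamma_{\partial \tilde{w}_k,\Omega}$ converge to $\Gamma_{\partial \tilde{w},\Omega}$ in the Kuratowski–Painlev\'e sense (this is the Attouch-type convergence of subdifferentials driven by locally uniform convergence of convex functions and is proved directly by a monotonicity argument, choosing an arbitrary cluster point in the limit graph and using $\sup$/$\inf$ inequalities among difference quotients). Pushing this convergence through the isometry $R$ and invoking the uniform bound $\mathrm{Lip}(F_k)\leq 1$, a standard Arzel\`a–Ascoli argument gives $F_k \to F$ locally uniformly on compact subsets of $U$. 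The weak convergence of graph currents of Lipschitz maps with a uniform Lipschitz constant under locally uniform convergence is then classical (see e.g. \cite[Vol.~I, Sect.~3.2.2]{GMS}): test the action of $G_{F_k}$ on a compactly supported smooth $n$-form via the area formula, use weak$^{*}$ convergence of $DF_k$ to $DF$ in $L^\infty$, and pass to the limit by dominated convergence. Pushing forward the resulting weak convergence by $R^{-1}$ and then by $\Psi^{-1}$ yields $G_{\partial w_k}\rightharpoonup G_{\partial w}$.

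\emph{Main obstacle.} The only genuinely delicate point is Step 4, namely transferring the scalar pointwise convergence of $w_k$ into Hausdorff/graph convergence of the subgradients, and hence into locally uniform convergence of the $1$-Lipschitz representations $F_k$. The uniform semi-convexity (equivalently the monotonicity of $\partial\tilde{w}_k$) is essential: it excludes pathological oscillation of the subgradients and ensures that Kuratowski–Painlev\'e convergence of the rotated graphs can be upgraded to locally uniform convergence of $F_k$, thereby compensating for the absence of any a priori differentiability of the limit $w$.
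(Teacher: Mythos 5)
Your overall strategy is essentially the paper's: re-express the subgradient graphs as graphs of uniformly Lipschitz maps via a rotation, upgrade the pointwise convergence of $w_k$ to locally uniform convergence of these Lipschitz parametrizations by a monotonicity/Arzel\`a--Ascoli argument, and conclude by weak continuity of minors for uniformly Lipschitz, uniformly converging maps. However, your write-up has genuine gaps. First, the reduction to the convex case rests on the unproved identity $G_{\partial w_k}=\Psi^{-1}_{\sharp}G_{\partial \tilde w_k}$ for the shear $\Psi(x,y)=(x,y+cx)$. The current $G_{\partial w}$ is \emph{defined} (Definition \ref{DefG}) through a rotation with angle $\theta<\theta_0(c)$, and the only independence statement available, Theorem \ref{thm32}(\romannumeral4), compares two rotations; it does not cover a shear composed with a rotation, and checking that the orientations agree $\mathcal{H}^n$-a.e.\ also on the ``vertical'' parts of the graph requires an argument of the same type as the proof of (\romannumeral4). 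The paper avoids this entirely by rotating the semi-convex graph directly, choosing $s>c$.

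Second, your Lipschitz maps $F_k$ are defined on domains $U_k$ that vary with $k$: you never show that a compact subset of $U$ is eventually contained in $U_k$, nor do you control the graph currents near $\partial U_k$ when $R^{-1}_{\sharp}G_{F_k}$ is tested against a form compactly supported in $\Omega\times\mathbb{R}^n$. This is exactly why the paper first extends $w_k,w$ to semi-convex functions on all of $\mathbb{R}^n$ with $w^{\ast}_k\rightrightarrows w^{\ast}$ (Colesanti's extension lemma together with \cite[Theorem B.3.1.4]{HJL}), so that by Proposition \ref{pro2}(\romannumeral4) the rotated maps $u_k$ are defined on all of $\mathbb{R}^n$ and the varying-domain problem disappears. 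Third, your mechanism for the final limit passage --- ``weak$^{*}$ convergence of $DF_k$ in $L^{\infty}$ and dominated convergence'' --- does not work as stated: the minors $M^{\beta}_{\overline{\alpha}}(DF_k)$ are nonlinear in $DF_k$, which need not converge pointwise a.e. What is actually needed is the weak $L^1$ continuity of minors for uniformly Lipschitz maps converging uniformly, which the paper proves as Lemma \ref{lma41} by integration by parts and induction on the order of the minor; your citation of \cite{GMS} points at the right fact, but the justification you give is not correct. Finally, the step you yourself identify as the main obstacle --- converting pointwise convergence of $w_k$ into locally uniform convergence of the Lipschitz parametrizations --- is only sketched; the paper carries it out concretely by a compactness-and-contradiction argument based on the subgradient inequality of Proposition \ref{pro2}(\romannumeral2) and the uniform convergence of the extensions.
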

 According to  Theorem  \ref{thm2},  the current $G_{\partial w}$ carried by the graph of subgradient of a semi-convex function $\omega$ is Lagrangian, which one can think of as meaning ``weakly curl-free" (see \cite{J} and below for more details).  Moreover, the current  can be weakly approximated by smooth currents if $\omega$ is also  Lipschitz.

As an application of the current defined as above,
a method is proposed to calculate the $k$-Hessian measures for semi-convex functions.
 Trudinger-Wang \cite{TW1,TW2} introduced the notion of $k$-Hessian measures as  Borel measures
associated to $k$-convex functions to study weak solutions of some elliptic partial differential equations.
Then   Colesanti-Hug \cite{AD} proved
that the $k$-Hessian measures of  semi-convex functions can be defined as coefficients of a  local Steiner type formula,
and pointed out  the equivalence of the two definitions in the class of semi-convex functions.
Now, a formula for the $k$-Hessian measures in terms of currents for semi-convex functions is as follow.
\begin{theorem}\label{thm4}
 Let $w \in W(\Omega)$ and  $G_{\partial w}:=\tau (\Gamma_{\partial w,\Omega},1, \xi)$,
 then for every Borel subset $B \subset \subset \Omega$,
 the $k$-Hessian measure of $w$ can be written as
 $$F_k(w,B)= \sum_{|\alpha|=k} \sigma(\alpha,\overline{\alpha}) \int _{\Gamma(\partial w,B)} \xi^{\alpha \overline{\alpha} }(x,y) d\mathcal{H}^n (x,y)~~~~k=0,1,\cdot\cdot\cdot,n.$$
  In particular, $F_0(w,B)= \mathcal{L}^n(B); F_n(w,B)=\mathcal{L}^n (\partial w(B))$ if $w$ is convex.
 \end{theorem}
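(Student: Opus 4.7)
The plan is to reduce the identity to the smooth case, where the coefficients of the orientation $n$-vector are literally minors of the Hessian, and then invoke the weak continuity Theorem \ref{thm2} to pass to arbitrary semi-convex $w$.

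First, I would establish the formula for $w\in C^2\cap W(\Omega)$. By Theorem \ref{thm1}(ii), $G_{\partial w}=G_{Dw}$, so $\Gamma_{\partial w,\Omega}$ is the smoothly parametrized graph of $Dw$ and the orientation $n$-vector is $\xi=M(J)/|M(J)|$, where $J$ is the $2n\times n$ Jacobian of $x\mapsto(x,Dw(x))$, with top block $I_n$ and bottom block $D^2w$. An $n\times n$ minor of $J$ obtained by selecting rows $\alpha$ from the identity block and rows $\overline{\alpha}$ from the Hessian block equals (up to $\sigma(\alpha,\overline{\alpha})$) a principal minor of $D^2w$. Consequently, the area formula yields
\begin{equation*}
\sum_{|\alpha|=k}\sigma(\alpha,\overline{\alpha})\int_{\Gamma(\partial w,B)}\xi^{\alpha\overline{\alpha}}\, d\mathcal{H}^n=\int_B S_k(D^2w)\, dx,
\end{equation*}
where $S_k$ is the $k$-th elementary symmetric function of the eigenvalues of $D^2w$; this coincides with $F_k(w,B)$ by the Trudinger--Wang definition of the $k$-Hessian measure (equivalent to the Colesanti--Hug Steiner-formula definition on the semi-convex class).

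For general $w\in W(\Omega)$ I would mollify: set $w_\varepsilon:=w\ast\rho_\varepsilon$, which lies in $C^\infty\cap W(\Omega',c)$ on any $\Omega'\subset\subset\Omega$ for $\varepsilon$ small, and $w_\varepsilon\to w$ pointwise. By Theorem \ref{thm2}, $G_{\partial w_\varepsilon}\rightharpoonup G_{\partial w}$ in $\mathcal{D}^n(\Omega\times\mathbb{R}^n)$, so pairing with any smooth compactly supported form $\varphi(x)\, dx^\alpha\wedge dy^{\overline{\alpha}}$ gives
\begin{equation*}
\int_{\Gamma(\partial w_\varepsilon,\Omega)}\varphi(x)\,\xi^{\alpha\overline{\alpha}}_\varepsilon\, d\mathcal{H}^n\longrightarrow \int_{\Gamma(\partial w,\Omega)}\varphi(x)\,\xi^{\alpha\overline{\alpha}}\, d\mathcal{H}^n.
\end{equation*}
The classical weak continuity of $k$-Hessian measures under pointwise/uniform convergence of semi-convex functions (Trudinger--Wang, Colesanti--Hug) provides $F_k(w_\varepsilon,\cdot)\rightharpoonup F_k(w,\cdot)$ as Radon measures. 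Combined with the smooth identity already established for each $w_\varepsilon$, this shows that the locally finite signed Radon measure defined by the right-hand side agrees with $F_k(w,\cdot)$ on $C^\infty_c(\Omega)$, hence as Radon measures, and therefore on every relatively compact Borel set.

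The two extremal cases come out by inspection. For $k=0$ only one term survives and its integral reduces to the mass of the projection of $G_{\partial w}$ onto $\Omega$ over $B$, which equals $\mathcal{L}^n(B)$. For $k=n$ with $w$ convex, Aleksandrov's theorem ensures $\partial w$ is a.e.\ single-valued and differentiable; the top minor is the Jacobian determinant of that differential, and the area formula identifies the integral as $\mathcal{L}^n(\partial w(B))$. The main obstacle is the simultaneous passage to the limit on both sides for an arbitrary Borel set $B$: weak convergence of currents only delivers pairings with smooth compactly supported forms, so one must exploit uniform local mass bounds on $G_{\partial w_\varepsilon}$ (a consequence of the fixed semi-convexity constant $c$ and Theorem \ref{thm1}(iii)) together with the Radon regularity of both signed measures to pass from $C^\infty_c$-duality to equality on relatively compact Borel sets.
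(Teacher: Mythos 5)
Your route---establish the identity for $C^2$ semi-convex functions, then mollify and pass to the limit using Theorem \ref{thm2} on one side and weak continuity of Hessian measures on the other---is genuinely different from the paper's argument. The paper never approximates: it pushes $G_{\partial w}=\Theta_{\sharp}G_u$ forward under $g_1(x,y)=\cos\theta x+\sin\theta y$, observes $g_{1\sharp}G_{\partial w}=[\![D]\!]$, tests against mollifications of $\chi_{D_B}$, and thereby writes $\mathcal{L}^n(D_B)$ as $\sum_{i}\cos^i\theta\sin^{n-i}\theta$ times the components of the current over $\Gamma_{\partial w,B}$; the local Steiner-type formula (Colesanti, Colesanti--Hug) then identifies these coefficients with $F_k(w,B)$ for every Borel $B\subset\subset\Omega$ at once, and the same computation plus the area formula and the a.e.\ injectivity result of Alberti--Ambrosio yields the two special cases. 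Your approach trades the Steiner formula for weak continuity of $F_k$; that is viable for the main identity, but the continuity you invoke must be the semi-convex version (Colesanti--Hug): Trudinger--Wang's weak continuity concerns $k$-convex functions, and a semi-convex function need not be $k$-convex (alternatively, reduce to the convex function $w+\frac{c}{2}|x|^2$ and expand).

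There are, however, two genuine gaps. First, the special cases are not ``by inspection.'' For $k=n$ and $w$ convex, the integral $\int_{\Gamma_{\partial w,B}}\xi^{0\overline{0}}\,d\mathcal{H}^n$ runs over the whole subgradient graph, including the vertical portions lying over the non-differentiability set of $w$, and these carry exactly the singular part of the Monge--Amp\`ere measure. Aleksandrov differentiability plus the area formula on $B$, as you propose, only produces $\int_B\det D^2_{ac}w\,dx$, which is strictly smaller than $\mathcal{L}^n(\partial w(B))$ whenever a singular part is present (e.g.\ $w(x)=|x|$). The paper avoids this by computing on the rotated graph, $\int_{D_B}\det(\sin\theta I+\cos\theta Du)\,dx'$, applying the area formula to $f_2(x')=\sin\theta x'+\cos\theta u(x')$ and using that the multiplicity is $1$ a.e.\ (Alberti--Ambrosio, Theorem 5.11); the case $F_0$ likewise needs an argument, namely $\int_{D_B}\det(\cos\theta I-\sin\theta Du)\,dx'=\mathcal{L}^n(B)$. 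Second, in your limit step the form $\varphi(x)\,dx^{\alpha}\wedge dy^{\overline{\alpha}}$ is not compactly supported in $\Omega\times\mathbb{R}^n$, and uniform local mass bounds plus Radon regularity do not exclude escape of mass in the $y$-direction; what actually saves the step is that $\partial w_{\epsilon}(\mathrm{spt}\,\varphi)$ is bounded uniformly in $\epsilon$ (uniform local Lipschitz bounds for the mollifications), so a fixed cut-off in $y$ can be inserted. Finally, check the index bookkeeping in the smooth case: choosing $k$ rows from the identity block leaves an $(n-k)\times(n-k)$ principal minor of $D^2w$, so the sum over $|\alpha|=k$ as you wrote it gives $S_{n-k}$; to match $F_k$ (with $F_0(w,B)=\mathcal{L}^n(B)$ and $F_n$ the Monge--Amp\`ere measure) the length-$k$ multi-index must count the $dy$-factors, a point on which the paper's own statement is loose but its proof of the special cases is explicit.
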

This paper is organized as follows. Some facts and notion about semi-convex functions, set-valued maps and Cartesian currents are given in Section 2.
Then we prove Theorem \ref{thm1}  in Section 3.
In Section 4 we show the weak continuity theorems  for semi-convex functions.
Finally in Section 5 we give a different formula for $k$-Hessian measures of semi-convex functions.

\section{Preliminaries}
This section reviews some notion and basic facts about  semi-convex functions, set-valued maps and Cartesian currents.
For more details, see \cite{HJL,F,JPA,J,GMS,RTR}.
\begin{definition}
 A real-valued function $ w:\Omega \longrightarrow \mathbb{R}$ is called semi-convex if there exists $c \geq 0$ such that the function $w(x)+\frac{c}{2}|x|^2 $ is convex in $\Omega$.

 For a    semi-convex function $w$  in  $\Omega$, the semiconvexity modulus of $w$ is defined by
 $$sc(w,\Omega):=\inf\{c \mid w+\frac{c}{2} |x|^2 ~\mbox{is convex in}~\Omega\}.$$
 We set the class of semi-convex functions by  $W(\Omega)$  and  $W(\Omega,c):= \{w \in W(\Omega) \mid sc(w,\Omega) \leq c\}$.
 \end{definition}
 \begin{examples}
 \begin{enumerate}
\item[(\romannumeral1)]  The simplest examples of semi-convex functions are convex functions.
\item [(\romannumeral2)]  The viscosity solutions of some   Hamiltonian-Jacobian equations are semi-concave, the class of functions $u$ such that $-u\in W(\Omega)$, see Lions \cite{L}.
\item [(\romannumeral3)] The distance function from a closed subset $K$ of a $n$-dimensional Riemannian manifold $(M,g)$ are locally semi-concave in $M\setminus K$, see Mantegazza-Mennucci\cite{MM}.
\end{enumerate}
 \end{examples}
We denote by $P(\mathbb{R}^n)$  the collection of subsets of $\mathbb{R}^n$, $P_0(\mathbb{R}^n):=P(\mathbb{R}^n)\setminus \{\emptyset\}$ and $I$ both the identity map on $\mathbb{R}^n$ and the $n \times n$ identity matrix.
Given a set-valued map $F: \Omega \rightarrow P(\mathbb{R}^n)$ and $A\subset \mathbb{R}^n$, we set
\begin{align*}
graph ~of~ F,~ &\Gamma_{F,A}:=\{(x,y)\in\mathbb{R}^{n}\times \mathbb{R}^{n} \mid y\in F(x), x\in \Omega\}, \\
image ~of~ F,~ &F(A):=\{y\in \mathbb{R}^n \mid y\in F(x), x\in A\}.\\
\end{align*}
\begin{definition}
A set-valued map $F: \Omega \rightarrow P(\mathbb{R}^n)$ is  monotone in $\Omega$ if its graph is monotone, i.e.,
$$\langle y_1-y_2, x_1-x_2 \rangle\geq 0$$
for all $(x_i,y_i)\in \Gamma_{F,\Omega}$ with $i=1,2$. A monotone map $F$ is  maximal in $\Omega$ if there is no other  monotone set-valued map in  $\Omega$ whose graph strictly contains the graph of $F$.
\end{definition}

 \begin{definition}
Let $ w:\Omega \longrightarrow \mathbb{R}$, the subgradient (subdifferential) $\partial w(x)$ of $w$ at $x$ is defined by
$$\partial w(x)=\{p \in \mathbb{R}^n \mid  \liminf_{y \rightarrow x}\frac{w(y)- w(x)-\langle p,y-x\rangle}{|y-x|} \geq 0\}.$$
\end{definition}


We also recall some elementary properties of subgradient of semi-convex functions.
\begin{proposition}\label{pro2}
Let $ w\in W(\Omega,c)$.
\begin{enumerate}
\item[{\em(\romannumeral1)}]  $w$ is locally Lipschitz in $\Omega$ and the image $\partial w(A)$ is bounded for any bounded subset $A\subset\subset\Omega$.
\item [{\em(\romannumeral2)}]  $\partial w(x)$ is a nonempty, closed and convex set for any $x \in \Omega$. Moreover,
    $$y \in \partial w(x) \Leftrightarrow w(z)\geq w(x)+ (z-x)\cdot y-\frac{c}{2} |z-x|^2~~ \mbox{for all}~ z \in \Omega.$$
\item [{\em(\romannumeral3)}]  $w$ has a second derivative for $\mathcal{L}^n$ a.e. on $\Omega$. Moreover, $\partial w$ is a maximal semi-monotone map in $\Omega$, i.e., $\partial w+c I$ is maximal monotone map in $\Omega$.
\item [{\em(\romannumeral4)}] $(sI+\partial w)(\Omega)=\mathbb{R}^n$ if $\Omega=\mathbb{R}^n$, where $s>c$.
\end{enumerate}
\end{proposition}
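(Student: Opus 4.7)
The unifying idea is to reduce every assertion to a classical fact about the auxiliary convex function $v(x):=w(x)+\frac{c}{2}|x|^{2}$, which is convex on $\Omega$ by the definition of $W(\Omega,c)$. A short computation starting from the definition of subgradient shows that the quadratic correction contributes only an $o(1)$ term in the relevant $\liminf$, yielding the pointwise identity
$$\partial v(x)=\partial w(x)+cx\qquad\text{for every }x\in\Omega$$
as set-valued maps. Once this bridge is in place, each item of the proposition becomes a translation of a standard convex-analytic statement.

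For (i), I would invoke the classical fact that every convex function is locally Lipschitz on the interior of its effective domain and that its subgradient is locally bounded; since $w=v-\frac{c}{2}|x|^{2}$, subtracting a smooth quadratic preserves local Lipschitzness, and the identity above gives $\partial w(A)=\partial v(A)-cA$, which is bounded on any $A\subset\subset\Omega$. For (ii), nonemptiness, closedness and convexity of $\partial v(x)$ at interior points come from the Hahn--Banach theorem applied to the epigraph of $v$ and transfer to $\partial w(x)$ under the rigid translation by $-cx$. To obtain the inequality characterization, I would plug $p=y+cx\in\partial v(x)$ into the global subgradient inequality $v(z)\geq v(x)+\langle p,z-x\rangle$ and simplify using
$$\tfrac{c}{2}|z|^{2}-\tfrac{c}{2}|x|^{2}-c\langle x,z-x\rangle=\tfrac{c}{2}|z-x|^{2},$$
which produces exactly the $-\frac{c}{2}|z-x|^{2}$ correction; the converse direction runs the same calculation backwards.

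For (iii), second-order differentiability of $v$ almost everywhere is Alexandrov's theorem, and it passes to $w$ because $v-w$ is smooth; maximal monotonicity of $\partial v$ is Rockafellar's classical result, and $\partial w+cI=\partial v$ immediately yields the maximal semi-monotonicity of $\partial w$. For (iv), with $\Omega=\mathbb{R}^{n}$ and $s>c$, given $z\in\mathbb{R}^{n}$ I would minimize the strictly convex, coercive functional
$$F(x):=v(x)+\tfrac{s-c}{2}|x|^{2}-\langle z,x\rangle,$$
coercivity following from the affine minorant for $v$ provided by any one of its subgradients together with the strictly positive quadratic term; the unique minimizer $x^{*}$ satisfies $0\in\partial F(x^{*})=\partial v(x^{*})+(s-c)x^{*}-z$, which rearranges to $z\in sx^{*}+\partial w(x^{*})$ and proves surjectivity. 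The only nontrivial external input is Alexandrov's theorem in (iii); the remainder is bookkeeping around the quadratic perturbation $v=w+\frac{c}{2}|x|^{2}$, with the small algebraic identity behind (ii) being the one place where signs and coefficients must be tracked carefully.
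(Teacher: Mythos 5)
Your proposal is correct and takes essentially the same route as the paper: the paper's entire proof is to set $v(x)=w(x)+\frac{c}{2}|x|^{2}$, note that $v$ is convex, and cite classical references (Colesanti--Salani, Alberti--Ambrosio--Cannarsa, Alberti--Ambrosio, Colesanti--Hug) for the four items, which is exactly your reduction with the citations replaced by short self-contained arguments. Your added details (the $o(|y-x|)$ computation giving $\partial v(x)=\partial w(x)+cx$, the quadratic identity in (ii), Alexandrov plus Rockafellar in (iii), and the coercive strictly convex minimization for (iv)) are sound, with only the standard convex-analysis facts that the liminf-type subgradient of a convex function satisfies the global inequality and that $\partial v$ is maximal monotone in the subdomain $\Omega$ (not just on $\mathbb{R}^{n}$) implicitly assumed.
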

\begin{proof}
If we take $v(x)=w(x)+\frac{c}{2} |x|^2$, then clearly $v$ is convex.
Now (\romannumeral1), (\romannumeral2), (\romannumeral3) and (\romannumeral4) are immediately followed from \cite[Proposition 2.1.5, 2.4.1 ]{AP}, \cite[Proposition 2.1]{AA2}, \cite[Theorem 3.2]{AA} and \cite[Theorem 3.5.8]{AD}, respectively.
\end{proof}

In order to obtain the main results, we introduce the definition of geometrically derivatives for set-valued maps from the choice of tangent cones to the graphs (see \cite{JPA}).
\begin{definition}
Let  a set-valued map $F: \Omega\rightarrow P(\mathbb{R}^n)$.
The contingent derivative $DF(x,y)$ of $F$ at $(x,y)\in \Gamma_{F,\Omega}$ is the set-valued map from $\mathbb{R}^n$ to ${\mathbb{R}^n}$ defined by
$$\Gamma_{DF(x,y),\mathbb{R}^n}:=\left \{(p,q) \mid \liminf_{h\rightarrow 0^+}\frac{d( (x+hp,y+hq) ,\Gamma(F,\Omega))}{h}=0\right\}.
$$
\end{definition}

It is very convenient to have the following characterization of contingent derivatives in terms of sequences:
$q \in DF(x,y)(p)$ if and only if there exist $ h_m \rightarrow 0^+$, $p_m\rightarrow p$ and $q_m\rightarrow q$ as $m\rightarrow \infty$ such that $y+h_m q_m \in F(x+h_m p_m).$

\begin{proposition}
If  $F:=f$  is a single-valued map and differentiable at  $x$, then $Df(x,f(x))(p)=Df(x)p$ for any $p\in \mathbb{R}^n$.
\end{proposition}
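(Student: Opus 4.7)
The plan is to prove equality of the two sets $Df(x,f(x))(p)$ and $\{Df(x)p\}$ by establishing both inclusions via the sequential characterization of the contingent derivative recalled just before the statement. Since $F=f$ is single-valued, the graph condition $y+h_m q_m\in F(x+h_m p_m)$ at the base point $y=f(x)$ forces
\[
q_m=\frac{f(x+h_m p_m)-f(x)}{h_m},
\]
so the set-valued definition collapses to a limit of difference quotients.

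For the inclusion $Df(x)p\in Df(x,f(x))(p)$, I would exhibit explicit sequences: take $h_m=1/m$, $p_m\equiv p$ and define $q_m$ by the formula above. The differentiability of $f$ at $x$ then gives $q_m\to Df(x)p$, and the graph condition holds by construction. Hence $Df(x)p$ lies in the contingent derivative at $(x,f(x))$ in direction $p$.

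For the reverse inclusion, take an arbitrary $q\in Df(x,f(x))(p)$ with witnessing sequences $h_m\to 0^+$, $p_m\to p$, $q_m\to q$. Writing
\[
f(x+h_m p_m)=f(x)+Df(x)(h_m p_m)+o(|h_m p_m|)=f(x)+h_m Df(x)p_m+o(h_m),
\]
where the little-$o$ term is uniform because $\{|p_m|\}$ is bounded, I divide by $h_m$ to get $q_m=Df(x)p_m+o(1)$. By continuity of the linear map $Df(x)$ one has $Df(x)p_m\to Df(x)p$, and therefore $q=\lim q_m=Df(x)p$. Combining the two inclusions yields the single-valued identity $Df(x,f(x))(p)=Df(x)p$.

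There is no real obstacle here; this is a short verification from the definitions. The only item that requires a line of justification is that the remainder term $o(|h_m p_m|)/h_m$ actually tends to zero along the varying sequence $p_m$, which follows because convergent sequences are bounded, so the computation reduces to the standard fact that Fréchet differentiability passes to directional limits taken along varying directions.
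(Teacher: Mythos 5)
Your argument is correct. Note, however, that the paper does not actually prove this proposition: it simply cites Aubin--Frankowska (Proposition 5.1.2 of the reference \cite{JPA}), so there is no internal proof to compare against; your write-up supplies the missing elementary verification. Your two inclusions are exactly the standard ones: single-valuedness turns the graph condition $f(x)+h_mq_m\in F(x+h_mp_m)$ into the difference-quotient identity $q_m=\bigl(f(x+h_mp_m)-f(x)\bigr)/h_m$; the choice $h_m=1/m$, $p_m\equiv p$ gives $Df(x)p\in Df(x,f(x))(p)$ (for $m$ large enough that $x+h_mp\in\Omega$, which holds since $\Omega$ is open); and for the converse you correctly handle the only delicate point, namely that the remainder $o(|h_mp_m|)/h_m=\frac{o(|h_mp_m|)}{|h_mp_m|}\,|p_m|$ tends to zero along the varying, bounded sequence $p_m$ (with the degenerate case $p_m=0$ being trivial), so every $q$ in the contingent derivative equals $Df(x)p$. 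What your route buys is self-containedness at the cost of a few lines; what the paper's citation buys is brevity, since the quoted result in Aubin--Frankowska is proved by essentially the same computation you give.
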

\begin{proof}
The proof can be seen in \cite[Proposition 5.1.2]{JPA}.
\end{proof}
Next we recall some notation and facts about currents and Geometric Measure Theory.

For integers  $n,N \geq 2$, we shall use the standard notation for ordered multi-indices
$$I(k,n):=\{\alpha=(\alpha_1,\cdot\cdot\cdot,\alpha_k) \mid \alpha_i  ~\mbox{integers}, 1\leq \alpha_1 <\cdot\cdot\cdot< \alpha_k\leq n\}.$$
 Set $I(0,n)=\{0\}$ and $|\alpha|=k$ if $\alpha \in I(k,n)$. If $\alpha\in I(k,n)$, $k=0,1,\cdot\cdot\cdot,n$,  $\overline{\alpha}$ is the element in $I(n-k,n)$ which complements $\alpha$  in $\{1,2,\cdot\cdot\cdot,n\}$ in the natural increasing order. So $\overline{0}=\{1,2,\cdot\cdot\cdot,n\}$. For $i \in \alpha$, $\alpha-i$ means the multi-index of length $k-1$ obtained by removing $i$ from $\alpha$.

Let $A=(a_{ij})_{n \times n}$ and $B=(b_{ij})_{n \times n}$ be  $n \times n$ matrixes.
Given two ordered multi-indices with $ \beta \in I(k,n), \alpha\in I (n-k,n) $, then
$A_{\overline{\alpha}}^{\beta}$ denotes
the $k \times k $-submatrix of $A$ with rows $(\beta_1,\cdot\cdot\cdot,\beta_k)$ and columns $(\overline{\alpha}_1,\cdot\cdot\cdot,\overline{\alpha}_k)$. Its determinant will be denoted by
$$M_{\overline{\alpha}}^{\beta}(A):=\det A_{\overline{\alpha}}^{\beta}.$$
We shall set
$$M_{\alpha \beta}(A,B):=\mbox{det}(C),~~\mbox{where}~ c_{ij}=\begin{cases}
a_{ij},~~~~i\in \alpha, \\
b_{ij},~~~~i \in \beta.
\end{cases}$$

The adjoint of $A_{\overline{\alpha}}^{\beta}$ is  defined  by the formula
$$(\adj A_{\overline{\alpha}}^{\beta})_j^i:= \sigma(i,\beta-i) \sigma(j,\overline{\alpha}-j) \det A_{\overline{\alpha}-j}^{\beta-i}~~~~ i \in \beta, j\in \overline{\alpha},$$
where $\sigma(\cdot,\cdot)$  is the sign of the permutation with reorders. So Laplace  formulas can be written as
$$M_{\overline{\alpha}}^{\beta}(A)= \sum_{j \in \overline{\alpha}} a_{ij} (\adj A_{\overline{\alpha}}^{\beta})_j^i.$$

Let $U$ be a open set in $\mathbb{R}^n$, we denote by $\mathcal{D}^k(U)$ the spaces of compactly supported $k$-form in $U$.  The dual space to $\mathcal{D}^k(U)$ is the class of $k$-currents $\mathcal{D}_k(U)$. For any open set $V\subset\subset U$ the mass of a current $T\in \mathcal{D}_k(U)$ in $V$ is
$$\mathbf{M}_{V}(T):=\sup\{T(\omega) \mid \omega \in \mathcal{D}^k(U),~\mbox{spt}~ \omega \subset V, \| \omega\|\leq 1 \},$$
 and $\mathbf{M}(T):=\mathbf{M}_U(T)$ denote the mass of $T$.

 A current $T=\tau(\mathcal{M}, \theta, \xi) \in \mathcal{D}_k(U)$, is called an  integer multiplicity rectifiable $k$-current (briefly i.m. rectifiable current) if it can be expressed
$$T(\omega)=\int_{\mathcal{M}}\langle \omega(x), \xi(x)\rangle \theta(x) d\mathcal{H}^k(x),~~~~\omega \in \mathcal{D}^k(U),$$
where $\mathcal{M}$ is an $\mathcal{H}^k$-measurable countably $k$-rectifiable subset of $U$,
$\theta$ is a locally $\mathcal{H}^k$-measurable positive integer-valued function,
and $\xi: \mathcal{M}\rightarrow \wedge_k(\mathbb{R}^{n})$ is a $\mathcal{H}^k$-measurable function such that for $\mathcal{H}^n$-a.e. point $x\in\mathcal{M}$,
$\xi(x)$ provides an orientation to the approximate tangent spaces  $Tan^k(\mathcal{M}, x)$.
  $\theta$ is called the multiplicity and $\xi$ is called the orientation for $T$.
  The i.m. rectifiable  $k$-currents in  $\mathcal{D}_k(U)$ is  denote by $\mathcal{R}_k(U)$ if $T$ has finite mass, and $T \in \mathcal{R}_{k,loc}(U)$ if $T$ has local finite mass.

  Let  $T=\tau (\mathcal{M},\theta,\xi) \in \mathcal{R}_k(U)$, and   $f: U\rightarrow V \subset \mathbb{R}^n$  be a Lipschitz map such that  $f_{| \mbox{spt}~T} $  is proper. Then the push-forward of $T$ under $f$  turns out to be an i.m rectifiable  $k$-current which can be explicitly written as (see \cite{F,GMS})
\begin{equation}\label{formula1}
\begin{split}
f_{\sharp} T(\omega) &= \int_{\mathcal{M}}\langle w(f(x))),(\wedge_k d^{\mathcal{M}}f)\xi(x)\rangle \theta(x) d\mathcal{H}^k(x)\\
                           &=\int_{f(\mathcal{M})}\langle\omega(y), \sum_{x\in f^{-1}(y) \cap \mathcal{M}_{+}} \theta (x) \frac{(\wedge_k d^{\mathcal{M}}f)\xi(x)}{|(\wedge_k d^{\mathcal{M}}f)\xi(x)|}\rangle d\mathcal{H}^k (y),\\
\end{split}
\end{equation}
where
$$\mathcal{M}_+=\{x \in \mathcal{M} \mid J_f^{\mathcal{M}}(x)=|(\wedge_k d^{\mathcal{M}}f)\xi(x)|>0\}.$$

$T=\tau(\mathcal{M}, \theta, \xi) \in \mathcal{R}_{n,loc}(\Omega\times \mathbb{R}^n)$ ($n\geq2$)  is called Lagrangian if for $\mathcal{H}^n$ a.e. $(x,y) \in \mathcal{M}$, the approximate tangent space $Tan^n(\mathcal{M}, (x,y))$ satisfies
 $$\langle\phi, \tau_1 \wedge\tau_2 \rangle=0 ~\mbox{for any two vectors}~\tau_1,\tau_2~\mbox{tangent to}~Tan^k(\mathcal{M}, (x,y))$$
where $\phi:=\sum_{i=1}^{\infty}dx^i \wedge dy^i$. One can check that $T$ is Lagrangian if and only if
$$T(\phi \wedge \eta)=0 ~~\mbox{for any}~ \eta \in \mathcal{D}_{n-2}(\Omega\times \mathbb{R}^n).$$

\section{The proof of Theorem 1.1 }
For $u \in \mathcal{A}^1(\Omega,\mathbb{R}^N)$,
 the  current $G_{u}=\tau(\mathcal{G}_{u,\Omega},1,\xi_{u}) \in \mathcal{R}_n(\Omega \times \mathbb{R}^N)$ is defined for $\omega \in \mathcal{D}^n(\Omega \times \mathbb{R}^N)$ by (see \cite[Vol. I, Sect. 3.2.1]{GMS})
\begin{align*}
  G_{u}(\omega) &= \int_{\mathcal{G}_{u,\Omega}} \langle\omega,\xi_{u}\rangle d \mathcal{H}^n \\
                   &= \int_{\Omega} \langle\omega(x,u(x)),M(Du(x))\rangle dx \\
                   &= \sum_{|\alpha|+|\beta|=n}  \sigma(\alpha,\overline{\alpha}) \int_{\Omega} \omega_{\alpha\beta}(x,u(x)) M_{\overline{\alpha}}^{\beta} (Du(x)) dx,
\end{align*}
where $M(Du)$ is the $n$-vector in $\bigwedge_n(\mathbb{R}^{n+N})$ given by
$$M(Du)=(e_1+\sum_{i=1}^{N} D_1 u^i \epsilon_i)\wedge ...\wedge(e_n+\sum_{i=1}^{N} D_n u^i \epsilon_i),$$
$\{e_i\}_{i=1}^n$, $\{\epsilon_i\}_{i=1}^{n}$ being canonical basis for $\mathbb{R}^n_x$, and $\mathbb{R}^n_y$, respectively.

In order to prove the main result, some lemmas are introduced as follows.
\begin{lemma}\label{lem13}
Let  $F: \Omega\rightarrow P(\mathbb{R}^n)$ be a set-valued map. If there exists a constant $s$ such that
$$\langle y_1-y_2, x_1-x_2\rangle \geq s|x_1-x_2|^2$$
for all $(x_1,y_1), (x_2,y_2) \in \Gamma_{F,\Omega} $,
then the contingent derivative $DF(x,y)$ at every point  $(x,y) \in \Gamma_{F,\Omega}$ is positive definite in the sense that
$$p\cdot q\geq s |p|^2$$
for all  $(p,q) \in \Gamma_{DF(x,y),\mathbb{R}^n}$.
\end{lemma}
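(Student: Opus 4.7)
The plan is to unpack the sequential characterization of the contingent derivative given just before the lemma, apply the monotonicity-type hypothesis along an approximating sequence, and then pass to the limit. The argument is essentially a rescaling, so no deep tools are needed.

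First I would fix $(x,y) \in \Gamma_{F,\Omega}$ and an arbitrary $(p,q) \in \Gamma_{DF(x,y),\mathbb{R}^n}$. By the sequential characterization stated in the preliminaries, there exist sequences $h_m \downarrow 0$, $p_m \to p$ and $q_m \to q$ with $y + h_m q_m \in F(x + h_m p_m)$ for all $m$. In particular, the points $(x,y)$ and $(x_m, y_m) := (x + h_m p_m,\, y + h_m q_m)$ both lie in $\Gamma_{F,\Omega}$ (for $m$ large enough that $x_m$ stays inside $\Omega$, which is ensured by $h_m \to 0$).

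Next I would apply the hypothesis to the pair $(x,y)$ and $(x_m,y_m)$. This gives
$$\langle y_m - y,\, x_m - x \rangle = h_m^2 \langle q_m, p_m \rangle \geq s\, |x_m - x|^2 = s\, h_m^2\, |p_m|^2.$$
Dividing both sides by $h_m^2 > 0$ yields $\langle q_m, p_m \rangle \geq s\, |p_m|^2$, and letting $m \to \infty$ together with the continuity of the inner product and the norm gives $\langle q, p \rangle \geq s\, |p|^2$, as required.

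There is no real obstacle here: the only point to keep an eye on is that $x + h_m p_m$ must belong to $\Omega$ so that the monotonicity-type estimate applies; this is automatic for $m$ large because $p_m$ is bounded and $h_m \to 0^+$, while $x \in \Omega$ open. Everything else is a direct rescaling and a limit, so the lemma follows immediately.
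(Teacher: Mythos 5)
Your proof is correct and follows essentially the same route as the paper's: both use the sequential characterization of the contingent derivative, apply the hypothesis to the pair $(x,y)$ and $(x+h_m p_m,\, y+h_m q_m)$, divide by $h_m^2$, and pass to the limit. Your added remark that $x+h_m p_m$ eventually lies in $\Omega$ is a harmless extra precaution that the paper leaves implicit in the definition of the graph.
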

\begin{proof}
Given $(x,y)\in \Gamma_{F,\Omega}, (p,q) \in \Gamma_{DF(x,y)}$, there exist $h_k \rightarrow 0^+$, $p_k\rightarrow p$ and $q_k\rightarrow q$ such that
$$(x,y)+h_k(p_k, q_k) \in \Gamma_{F,\Omega}.$$
So $y+h_k q_k \in  F (x+h_k p_k)$ and
$$h_k^2 p_k\cdot q_k =\langle  y+h_k q_k -y, x+h_k p_k-x\rangle\geq s h_k^2 |p_k|^2,$$
which implies that $p\cdot q\geq s|p|^2$.
\end{proof}

\begin{lemma}\label{lem1}
Let  $A$   be an   $n\times n$  matrix and semi ($s=0$), weak ($s<0$), strongly ($s>0$) positive definite in the sense that
$$ x^{T} A x \geq s x^{T} x$$
for all $x \in \mathbb{R}^n ,x \neq 0$. Then  $Re(\lambda)\geq s$,
where $Re(\lambda)$ is the real part of any eigenvalue  $\lambda$  of  $A$. Furthermore,
$\det A\geq ~(\mbox{or}>)~0$  if  $A$ is semi (or strongly) positive definite.
\end{lemma}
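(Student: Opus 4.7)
The plan is to prove the eigenvalue bound first, then deduce the determinant sign as a corollary. To handle a possibly complex eigenvalue $\lambda=a+ib$ of the real matrix $A$, I would pick a (nonzero) eigenvector $z=x+iy\in\mathbb{C}^n$ with $x,y\in\mathbb{R}^n$ and split the identity $Az=\lambda z$ into its real and imaginary parts, obtaining the real system
$$Ax=ax-by,\qquad Ay=bx+ay.$$
Left-multiplying the first equation by $x^{T}$ and the second by $y^{T}$ and summing, the off-diagonal $b$-terms cancel because $x^{T}y=y^{T}x$, which yields the key identity
$$x^{T}Ax+y^{T}Ay=a\bigl(|x|^{2}+|y|^{2}\bigr).$$

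Next I would invoke the hypothesis $v^{T}Av\geq s|v|^{2}$ separately with $v=x$ and $v=y$ (the inequality holds trivially with equality if either vector is zero). Adding these gives
$$a\bigl(|x|^{2}+|y|^{2}\bigr)\geq s\bigl(|x|^{2}+|y|^{2}\bigr),$$
and since $z\neq 0$ forces $|x|^{2}+|y|^{2}>0$, we may divide to conclude $\mathrm{Re}(\lambda)=a\geq s$, proving the first claim.

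For the determinant, I would use the factorization of the characteristic polynomial over $\mathbb{C}$: $\det A$ equals the product of all eigenvalues (with multiplicity), and non-real eigenvalues of the real matrix $A$ come in conjugate pairs whose joint contribution $\lambda\bar\lambda=|\lambda|^{2}$ is nonnegative (and strictly positive unless $\lambda=0$). When $s=0$, every real eigenvalue is $\geq 0$ by the preceding step, so the total product is $\geq 0$; when $s>0$, every real eigenvalue is $\geq s>0$ and no complex eigenvalue can vanish (since $\mathrm{Re}(\lambda)\geq s>0$), so the product is strictly positive.

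I do not anticipate a real obstacle; the only subtle point is the cross-term cancellation, which works because only the symmetric part of $A$ is controlled by the hypothesis, and the sum $x^{T}Ax+y^{T}Ay$ automatically extracts precisely that symmetric quadratic form. One should simply be careful that the inequality $v^{T}Av\geq s|v|^{2}$ is applied to the real vectors $x$ and $y$ (not to the complex $z$), and that the case where one of $x,y$ vanishes is handled by the trivial equality $0\geq s\cdot 0$.
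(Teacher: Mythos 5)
Your proof is correct and follows essentially the same route as the paper: split $Az=\lambda z$ into real and imaginary parts, note the cross terms cancel since $x^{T}y=y^{T}x$, and bound $\mathrm{Re}(\lambda)=\frac{x^{T}Ax+y^{T}Ay}{|x|^{2}+|y|^{2}}\geq s$. You additionally spell out the determinant claim via conjugate pairs and real eigenvalues, which the paper leaves implicit; that part is also correct.
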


\begin{proof}
Let  $\lambda=\mu+ i\nu \in \mathbb{C}$  be an eigenvalue of  $A$ with  $\mu,\nu \in \mathbb{R}$, and  $z \in\mathbb{C}^n$  be a right eigenvector associated with  $\lambda$. Decompose  $z$  as  $x+iy$  with  $x,y \in \mathbb{R}^n$. Then $(A-\lambda)z=0$, and thus
$$\begin{cases}
(A-\mu)x+\nu y=0, \\
(A-\mu)y-\nu x=0,\\
\end{cases}$$
which implies that $x^{T}(A-\mu)x+y^{T}(A-\mu)y=\nu(y^{T}x-x^{T}y)=0$, and hence
$$\mu=\frac{x^{T}Ax+y^{T}Ay}{x^{T}x+y^{T}y} \geq s.$$
\end{proof}


\begin{lemma}\label{lem22}
Let  $A=(a_{ij})_{n \times n}$  be an  $n\times n$ matrix, then for any  $ c,d>0$
$$ \sum_{|\alpha|+|\beta|=n}(M_{\alpha \beta}(A,cI-dA))^2>0.$$

\end{lemma}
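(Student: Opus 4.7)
The plan is to argue by contradiction. The left-hand side is a sum of squares of real numbers, hence nonnegative, and it equals zero iff every $M_{\alpha \beta}(A, cI - dA) = 0$. I will assume that all these minors vanish and derive a spectral impossibility for $A$.

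The first step is to recognise that the minors in question are precisely the multilinear coefficients of a single determinant. Writing $a_i$ and $b_i$ for the $i$-th rows of $A$ and $B := cI - dA$ respectively, row-multilinearity of the determinant gives
$$P(t_1, \ldots, t_n) := \det(a_1 + t_1 b_1, \ldots, a_n + t_n b_n) = \sum_{S \subseteq \{1, \ldots, n\}} \Bigl(\prod_{i \in S} t_i \Bigr)\, M_{\overline{S},\, S}(A, B),$$
since expanding each row $a_i + t_i b_i$ amounts to placing $i$ either in the $A$-block $\overline S$ (selecting $a_i$) or in the $B$-block $S$ (selecting $t_i b_i$). Thus the hypothesis that every $M_{\alpha\beta}(A, cI - dA)$ vanishes is equivalent to $P \equiv 0$ as a polynomial in $t_1, \ldots, t_n$. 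I would then specialise to the diagonal $t_1 = \cdots = t_n = t$, which collapses $P$ to
$$\det(A + tB) = \det\bigl((1 - dt)\,A + ct\, I\bigr) \equiv 0 \quad \text{in } t.$$

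For $t \in [0, 1/d)$ one has $1 - dt > 0$, so this identity factors as $(1 - dt)^n \det\bigl(A + \tfrac{ct}{1 - dt} I\bigr) = 0$, forcing $-ct/(1-dt)$ to be an eigenvalue of $A$ for every such $t$. Since $c, d > 0$, the map $t \mapsto -ct/(1-dt)$ is strictly monotone on $[0, 1/d)$ and sweeps out infinitely many real values, which is incompatible with $A$ possessing at most $n$ eigenvalues; this contradiction gives the strict positivity. The conceptual crux, more than a technical obstacle, is the opening step: recognising that the complementary-pair minors $M_{\alpha\beta}(A, B)$ are exactly the multilinear coefficients of $\det(a_i + t_i b_i)_{i=1}^n$, and then noticing that the diagonal specialisation $t_i = t$ meshes cleanly with the affine structure $B = cI - dA$ to produce the Möbius-versus-finite-spectrum clash.
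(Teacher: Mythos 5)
Your proof is correct, but it takes a genuinely different route from the paper's. The paper argues constructively: writing $A$ in rows $\rho_1,\dots,\rho_n$ and choosing $\alpha$ to index a maximal linearly independent family of rows (so $|\alpha|=\mathrm{rank}\,A$, with the case $A=0$ handled separately via $\det(cI)=c^n\neq 0$), it performs reversible row operations on the matrix whose rows are $\rho_{\alpha_1},\dots,\rho_{\alpha_r}$ and $cI_{\beta_1}-d\rho_{\beta_1},\dots,cI_{\beta_{n-r}}-d\rho_{\beta_{n-r}}$, eliminating the $-d\rho_{\beta_j}$ parts (each $\rho_{\beta_j}$ lies in the span of the chosen rows) to reach a block form, and thereby exhibits one specific nonvanishing mixed minor $M_{\alpha\beta}(A,cI-dA)$. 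You instead encode all the mixed minors at once as the coefficients of the multi-affine polynomial $P(t_1,\dots,t_n)=\det(a_1+t_1b_1,\dots,a_n+t_nb_n)$, assume they all vanish, specialise to the diagonal, and contradict the finiteness of the spectrum of $A$; this is valid, since distinct subsets $S$ give distinct square-free monomials, so $P\equiv 0$ is indeed equivalent to the vanishing of every $M_{\overline S,S}(A,cI-dA)$, and the map $t\mapsto -ct/(1-dt)$ is injective on $[0,1/d)$. The paper's approach buys the extra information of \emph{which} minor is nonzero (the one whose $A$-rows form a row basis), which ties the statement to $\mathrm{rank}\,A$; yours is more flexible and in fact admits a further shortcut that removes the eigenvalue/M\"obius step entirely: once $P\equiv 0$, evaluate at $t_1=\cdots=t_n=1/d$ to get $0=\det\bigl(A+\tfrac1d(cI-dA)\bigr)=(c/d)^n\neq 0$, an immediate contradiction --- your restriction to $t\in[0,1/d)$ is the only reason you need the detour through infinitely many eigenvalues.
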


\begin{proof}
If  $A=0$, then  $M_{\alpha\beta}(A,cI-A)\neq0$ for $\alpha=0, \beta=(1,2,...n)$.
If  $rank(A)=r>0$, then there exist  $\alpha,\overline{\beta}  \in I(r,n)$ such that $M^{\alpha}_{\overline{\beta}}(A) \neq 0$. $A$ can be written as   $A:=(\rho_1,\rho_2,\cdot\cdot\cdot,\rho_n)^{T}$, where $ \rho_i  \in \mathbb{R}^n$. Then  $\rho_{\alpha_1}, \rho_{\alpha_2},\cdot\cdot\cdot,\rho_{\alpha_r}$ are linearly independent and form a  basis of  $\rho_1, \rho_2, \cdot\cdot\cdot,\rho_n$. Let   $\longrightarrow$ be reversible
linear transformations, then some tedious manipulation yields
$$\left(\rho_{\alpha_1}, \cdot \cdot\cdot, \rho_{\alpha_r}, cI_{\beta_{1}}-d\rho_{\beta_1}
, \cdot \cdot \cdot, cI_{\beta_{n-r}}-d\rho_{\beta_{n-r}}\right)^{T}
\longrightarrow
\left(
  \begin{array}{cc}
    A^{\alpha}_{\overline{\beta}} & 0 \\
    0 & I_{\beta} \\
  \end{array}
\right),$$
which implies  $M_{\alpha \beta}(A,cI-A) \neq 0$.
\end{proof}

  Let $F: \Omega\rightarrow P_0(\mathbb{R}^n)$ be a set-valued map such that $F+cI$ is maximal monotone in $\Omega$, then $f:\Omega\rightarrow\mathbb{R}^n$ such that $f(x)\subset F(x)$ for any $x\in \Omega$. According to \cite[Theorem 3.2]{AA}, $f$ is  approximately differentiable a.e.  with approximate differential $Df$. Given $s>c$, we define a rotation transformation $\Theta$ on $\mathbb{R}^n \times \mathbb{R}^n$ by
$$(x',y') \mapsto (\cos \theta x'-\sin \theta y', \sin \theta x'+ \cos \theta y'),$$
where $\theta:= \arccos \frac{2s}{\sqrt{1+4s^2}}< \arccos \frac{2c}{\sqrt{1+4c^2}}:=\theta_0$. With the help of the preceding lemmas we can now prove the following theorem.

\begin{theorem} \label{thm32}
  $F$, $f$ and $\Theta$ are given as above.  The following hold:
\begin{enumerate}
\item[{\em (\romannumeral1)}]  $\Theta^{-1}(\Gamma_{F,\Omega})$  can be written as the graph of a Lipschitz function  $u$, i.e.,
    $$\Gamma_{F,\Omega} = \Theta(\Gamma_{u,D}),$$
where $D:=\{\cos \theta x+ \sin \theta y \mid (x,y) \in \Gamma_{F,\Omega}\}$.
\item [{\em (\romannumeral2)}] $\Theta_{\sharp} G_u  = \tau (\Gamma_{F,\Omega},1,\xi)$.
\item [{\em(\romannumeral3)}]   $\xi(x,f(x))=\xi_f(x, f(x))$ for $\mathcal{L}^n~ a.e.~ x\in \Omega$,
where $\xi_f(x,f(x)):= M(Df(x))/|M(Df(x))|$.
\item [{\em(\romannumeral4)}] If  $0<\theta_1<\theta_2<\theta_0$, then  $\Theta_{1 \sharp} G_{u_{1}}  = \Theta_{2 \sharp} G_{u_{2}} $.
\end{enumerate}
\end{theorem}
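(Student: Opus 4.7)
The overall plan is to reduce the theorem to the push-forward formula \eqref{formula1} applied to the isometry $\Theta$, after showing that the rotated graph $\Theta^{-1}(\Gamma_{F,\Omega})$ is nonparametric and Lipschitz, and then comparing orientations via Lemmas \ref{lem1}--\ref{lem22}. For part (i), I would take $(x_i,y_i)\in\Gamma_{F,\Omega}$, set $a=x_1-x_2$, $b=y_1-y_2$, and $(X_i,Y_i)=\Theta^{-1}(x_i,y_i)$, so that $\Delta X=\cos\theta\,a+\sin\theta\,b$ and $\Delta Y=-\sin\theta\,a+\cos\theta\,b$. The monotonicity of $F+cI$ yields $\langle a,b\rangle\geq -c|a|^{2}$; inserting this into
$$|\Delta X|^{2}=\cos^{2}\theta|a|^{2}+2\sin\theta\cos\theta\langle a,b\rangle+\sin^{2}\theta|b|^{2}$$
and exploiting $\cot\theta=2s>2c$ produces $|\Delta X|^{2}\geq K(|a|^{2}+|b|^{2})$ for some $K=K(\theta,c,s)>0$. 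This simultaneously furnishes injectivity (so $\Theta^{-1}(\Gamma_{F,\Omega})$ is the graph of a single-valued $u$ on $D$) and, via the isometry identity $|\Delta X|^{2}+|\Delta Y|^{2}=|a|^{2}+|b|^{2}$, the Lipschitz estimate $|\Delta Y|\leq\sqrt{1/K-1}\,|\Delta X|$. Part (ii) is then immediate: $u$ being Lipschitz on $D$ makes $G_{u}=\tau(\mathcal{G}_{u,D},1,\xi_{u})$ a standard i.m.\ rectifiable current, and formula \eqref{formula1} applied to the smooth bijective isometry $\Theta$ gives $\Theta_{\sharp}G_{u}=\tau(\Gamma_{F,\Omega},1,\xi)$ with $\xi\circ\Theta:=(\wedge_{n}d\Theta)\xi_{u}$, the multiplicity staying at $1$ and $\xi$ being a unit $n$-vector since $\wedge_{n}d\Theta$ is an isometry.

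Part (iii) is the crucial step. By Rademacher, $u$ is differentiable at $\mathcal{L}^{n}$-a.e.\ $X\in D$, and by \cite[Thm.\ 3.2]{AA} (cf.\ Proposition \ref{pro2}(iii)) $f$ is approximately differentiable at $\mathcal{L}^{n}$-a.e.\ $x\in\Omega$. Since the maximal monotone $F+cI$ is single-valued $\mathcal{L}^{n}$-a.e., the graphs $\Gamma_{f,\Omega}$ and $\Gamma_{F,\Omega}$ coincide off an $\mathcal{H}^{n}$-null set (via the area formula applied to the Lipschitz map $X\mapsto\cos\theta\,X-\sin\theta\,u(X)$, whose Jacobian is a.e.\ nonzero). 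At a common differentiability point, the approximate tangent plane to $\mathcal{G}_{u,D}$ spanned by the columns of $\binom{I}{Du(X)}$ is carried by $d\Theta$ to the plane spanned by the columns of $\binom{\cos\theta\,I-\sin\theta\,Du(X)}{\sin\theta\,I+\cos\theta\,Du(X)}$, and this must coincide with the tangent plane to $\Gamma_{f,\Omega}$ at $(x,f(x))$ spanned by the columns of $\binom{I}{Df(x)}$, forcing
$$Df(x)\bigl(\cos\theta\,I-\sin\theta\,Du(X)\bigr)=\sin\theta\,I+\cos\theta\,Du(X).$$
Lemma \ref{lem22} applied with $A=\tan\theta\,I+Du(X)$ and $(c,d)=(1+\tan^{2}\theta,\tan\theta)$ makes the sum of squared $n\times n$ minors of the pair strictly positive, so $\cos\theta\,I-\sin\theta\,Du(X)$ has full rank a.e., and Lemma \ref{lem1} (combined with Lemma \ref{lem13} applied to the contingent derivative of $F+cI$) pins down the positive sign of its determinant. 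Cauchy--Binet then yields $(\wedge_{n}d\Theta)M(Du)=\det(\cos\theta\,I-\sin\theta\,Du)\cdot M(Df)$, and normalization gives $\xi(x,f(x))=\xi_{f}(x,f(x))$ a.e.

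Part (iv) follows at once: both $\Theta_{1\sharp}G_{u_{1}}$ and $\Theta_{2\sharp}G_{u_{2}}$ are i.m.\ rectifiable currents of multiplicity one carried by the common set $\Gamma_{F,\Omega}$, and by part (iii) both orientations agree with $\xi_{f}$ on the $\mathcal{H}^{n}$-full subset $\Gamma_{f,\Omega}$, so the two currents coincide. The principal obstacle is part (iii): one must carefully track the signs produced by $\wedge_{n}d\Theta$ on the multi-index components of $\xi_{u}$ and match them against those of $\xi_{f}$, invoking Lemma \ref{lem22} to exclude the rank-drop locus and Lemma \ref{lem1} to fix the correct overall sign. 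The bookkeeping is delicate because the rotation simultaneously mixes base and fiber coordinates across all multi-indices.
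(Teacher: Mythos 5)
Your parts (i) and (ii) are essentially the paper's argument, and your remark that $\wedge_n d\Theta$ is an isometry (so the Jacobian of the push-forward never degenerates) is a legitimate shortcut past the paper's use of Lemma \ref{lem22} at that stage. The trouble is that your (iii) and (iv) lean on a claim that is false: that $\Gamma_{f,\Omega}$ and $\Gamma_{F,\Omega}$ coincide up to an $\mathcal{H}^n$-null set, equivalently that the Jacobian of $X\mapsto \cos\theta\,X-\sin\theta\,u(X)$ is a.e.\ nonzero on $D$. Over the $\mathcal{L}^n$-null set where $F$ is multivalued the graph has vertical pieces of positive $\mathcal{H}^n$-measure (for $w(x)=|x|$, $n=1$, the segment $\{0\}\times[-1,1]$ lies in $\Gamma_{\partial w}$ and has $\mathcal{H}^1$-measure $2$, while $\Gamma_f$ meets it in one point), and on the corresponding portion of $D$ one has $\det(\cos\theta I-\sin\theta Du)=0$. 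For (iii) itself this is not fatal, since the statement only concerns the points $(x,f(x))$ for $\mathcal{L}^n$-a.e.\ $x$, and your skeleton (the identity $Df\,(\cos\theta I-\sin\theta Du)=\sin\theta I+\cos\theta Du$, invertibility and nonnegativity of $\det(\cos\theta I-\sin\theta Du)$ via Lemmas \ref{lem13} and \ref{lem1}, then the wedge/Cauchy--Binet computation) is the paper's; but the assertion that the two tangent planes ``must coincide'' needs an argument --- the paper derives the same matrix identity by (approximately) differentiating $u(\cos\theta x+\sin\theta f(x))=-\sin\theta x+\cos\theta f(x)$ --- and Lemma \ref{lem22} does not do what you ask of it: it says the mixed minors of the pair $(A,cI-dA)$ cannot all vanish, not that $cI-dA$ has full rank, and full rank a.e.\ on $D$ is in fact false, as just noted; the invertibility you need at the good points already follows from the chain-rule identity.

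The genuine gap is (iv). You conclude $\Theta_{1\sharp}G_{u_1}=\Theta_{2\sharp}G_{u_2}$ because both are multiplicity-one currents carried by $\Gamma_{F,\Omega}$ whose orientations agree, by (iii), on $\Gamma_{f,\Omega}$, which you call an $\mathcal{H}^n$-full subset --- but it is not full. Part (iii) fixes the orientation only on the nonvertical part of the graph; on the vertical parts, which in general carry positive $\mathcal{H}^n$-measure, your argument compares nothing, and two multiplicity-one currents on the same carrier differ exactly where their orientations differ, so equality does not follow. The paper closes this by factoring $\Theta_2=\Theta_1\circ\Theta_3$ and proving $G_{u_1}=\Theta_{3\sharp}G_{u_2}$ directly: the map $H(x'')=\cos\theta_3 x''-\sin\theta_3 u_2(x'')$ is strongly monotone, see \eqref{formlula03}, so Lemmas \ref{lem13} and \ref{lem1} give $\det(\cos\theta_3 I-\sin\theta_3 Du_2)>0$ a.e.\ on all of $D_2$ --- in particular at the points lying over the vertical parts of $\Gamma_{F,\Omega}$ --- and then the same push-forward computation as in (ii)--(iii) identifies the two currents everywhere. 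An argument of this kind on the vertical parts is exactly what your proof of (iv) is missing.
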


\begin{proof}
It is simple to show that there exists $l>0$ such that
$$|y'_1-y'_2|^2\leq l |x'_1-x'_2 |^2~~~\mbox{for any}~ (x'_1,y'_1), (x'_2,y'_2) \in \Theta^{-1}(\Gamma_{F,\Omega}),$$
where $l\geq \max\{4s^2,\frac{4sc+1}{4s^2-4sc}\}$, which implies (\romannumeral1).
Moreover, we can show that  $D$ is a domain in $\mathbb{R}^n$ and
$$\mbox{Lip}(u,D):= \sup \left\{ \frac{|u(x_1)-u(x_2)|}{ |x_1-x_2|} \mid x_1,x_2 \in D, x_1\neq x_2  \right\}\leq \max\left \{2s,\sqrt{\frac{4sc+1}{4s^2-4sc}}\right\}.$$
 Then the i.m. rectifiable current  $G_{u} =\tau(\Gamma_{u,D},1,\xi_u)$ is carried by the graph of $u$.

An easy deduction gives that
\begin{equation}\label{formula2}
\langle u(x'_1)-u(x'_2),x'_1-x'_2\rangle\leq 2s |x'_1-x'_2 |^2
~\mbox{for any}~ x'_1,x'_2 \in D.
\end{equation}


For any $x'\in D$, let $\mathcal{M}:=\Gamma_{u,D}$ and $A:=(a_{ij})_{n \times n}= Du(x')$, then

$$M(Du(x'))=(e'_1+\sum_{s=1}^n a_{s1} \epsilon'_s) \wedge\cdot\cdot\cdot\wedge (e'_n+\sum_{s=1}^n a_{sn} \epsilon'_s).$$
Let $\tau_i:=e'_i+\sum_{s=1}^n a_{si} \epsilon'_s$ and $\zeta_i:=d^{\mathcal{M}}\Theta(\tau_i)$. Note that

\begin{align*}
\zeta_i&= \sum_{s=1}^n(\tau_i \cdot \nabla^{\mathcal{M}} \Theta^s) e_s+ \sum_{s=1}^n (\tau_i \cdot \nabla^{\mathcal{M}} \Theta^{n+s}) \varepsilon_s\\
                           &=\sum_{j=1}^n (\cos \theta \delta_{ji}- \sin \theta a_{ji})e_j+ \sum_{j=1}^n ( \sin \theta \delta_{ji}+\cos \theta a_{ji}) \varepsilon_j.
\end{align*}
Set  $P=(\cos \theta I- \sin \theta A)$ and $Q=(\sin \theta I+ \cos \theta A)$.
Then  $Q=\sqrt{(1+4c^2)}I-2cP$ and
\begin{align*}
\zeta_1 \wedge \cdot \cdot \cdot \wedge \zeta_n &=(\sum_{j=1}^n p_{j1}e_j+ \sum_{j=1}^n q_{j1} \varepsilon_j) \wedge \cdot \cdot \cdot \wedge (\sum_{j=1}^n p_{jn}e_j+ \sum_{j=1}^n q_{jn} \varepsilon_j)\\
                           &=\sum_{|\alpha|+|\beta|=n} \sum_{|\gamma|=|\beta|}  \sigma(\overline{\gamma},\gamma) M^{\alpha}_{\overline{\gamma}}(P)M_{\gamma}^{\beta} (Q) e_{\alpha} \wedge \varepsilon_{\beta}\\
                           &=\sum_{|\alpha|+|\beta|=n} \sigma(\overline{\beta},\beta) M_{\alpha \beta}(P,Q)e_{\alpha} \wedge \varepsilon_{\beta}.
\end{align*}
By Lemma \ref{lem22}
\begin{align*}
\mathcal{M}_+ &=\{(x',y') \in\mathcal{M} \mid~  |(\wedge_n d^{\mathcal{M}}\Theta)(M(Du(x')))| >0\}\\  &=\{(x',u(x')) \in \mathcal{M} \mid ~ |\zeta_1 \wedge \cdot\cdot\cdot\wedge \zeta_n| >0\}            \\
               &=\{(x',u(x')) \in \mathcal{M} \mid ~ Du(x') ~\mbox{exist}\},
\end{align*}
 which implies that  $\mathcal{H}^n(\mathcal{M}\backslash \mathcal{M}^+) =0$.
According to (\ref{formula1}), it follows that for any  $\omega(x,y) \in \mathcal{D}^n(\Omega \times \mathbb{R}^n)$,
\begin{align*}
\Theta_{\sharp} G_u  (\omega(x,y))&=\int_{\Gamma_{F,\Omega}}\big\langle\omega(x,y),  \frac{\zeta_1 \wedge \cdot\cdot\cdot\wedge \zeta_n}{|(\zeta_1 \wedge \cdot\cdot\cdot\wedge \zeta_n|}(\cos \theta x+\sin \theta y,-\sin \theta x+\cos \theta y)\big\rangle d\mathcal{H}^n (x,y)\\
                       &=\tau(\Gamma_{F,\Omega},1,\xi)(\omega(x,y)),
\end{align*}
where the  orientation $\xi(x,y)=\frac{\zeta_1 \wedge \cdot\cdot\cdot\wedge \zeta_n}{|(\zeta_1 \wedge \cdot\cdot\cdot\wedge \zeta_n|}(\cos \theta x+\sin \theta y,-\sin \theta x+\cos \theta y)$
for $\mathcal{H}^n$-a.e. $(x,y) \in \Gamma_{F,\Omega}$.
Therefore  $\Theta_{\sharp} G_u  =\tau(\Gamma_{F,\Omega},1,\xi)$.

  Set $E:= \{x \in \Omega \mid x \in \mathcal{L}_f, Df(x), Du(\cos \theta x+\sin \theta f(x))~\mbox{exists}\}$.
According to Proposition \ref{pro2} (\romannumeral3) and the fact that $u \in L(D)$ , it follows that $\mathcal{L}^n(\Omega\backslash E)=0$.


Fix  $x_0 \in E$ and  denote $A:=Du(\cos \theta x_0+\sin \theta f(x_0))$, $B:=Df(x_0)$. Since $u
 (\cos \theta x_0+\sin \theta f(x_0))=-\sin \theta x_0+\cos \theta f(x_0)$, then
 $$A(\cos \theta I+ \sin \theta B)=-\sin \theta I+\cos \theta B,~~(\cos \theta A+\sin \theta I)=(\cos \theta I-\sin \theta A)B.$$
 So
 $$(\cos \theta I- \sin \theta A)(\cos \theta I+\sin \theta B)=I,$$
which implies that  $(\cos \theta I- \sin \theta A)$ is reversible
and  $(\sin \theta I+ \cos \theta A)(\cos \theta I- \sin \theta A)^{-1}=B$.
Let  $P=(\cos \theta I- \sin \theta A)$ and $Q=(\sin \theta I+ \cos \theta A)$ for convenience, then
\begin{align*}
&\ \ \ \ \zeta_1 \wedge \cdot \cdot \cdot \wedge \zeta_n (\cos \theta x_0+\sin \theta f(x_0),-\sin \theta x_0+\cos \theta f(x_0))\\
&=(\sum_{j=1}^n p_{j1}e_j+ \sum_{j=1}^n q_{j1} \varepsilon_j) \wedge \cdot \cdot \cdot \wedge (\sum_{j=1}^n p_{jn}e_j+ \sum_{j=1}^n q_{jn} \varepsilon_j)\\
                           &=(\sum_{j=1}^n p_{j1}e_j+ \sum_{j=1}^n (BP)_{j1} \varepsilon_j) \wedge \cdot \cdot \cdot \wedge (\sum_{j=1}^n p_{jn}e_j+ \sum_{j=1}^n (BP)_{jn} \varepsilon_j)\\
   &= \det(P^{T})(e_1+ \sum_{s=1}^n b_{s1} \varepsilon_s) \wedge \cdot \cdot \cdot \wedge (e_n+ \sum_{s=1}^n b_{sn} \varepsilon_s).
\end{align*}
 Note that $\det P\geq0$ by (\ref{formula2}), Lemma \ref{lem13} and Lemma \ref{lem1}, and thus
\begin{align*}
\xi(x_0,f(x_0))&=\s(P) \frac{(e_1+ \sum_{s=1}^n b_{s1} \varepsilon_s) \wedge \cdot \cdot \cdot \wedge (e_n+ \sum_{s=1}^n b_{sn} \varepsilon_s)}{|(e_1+ \sum_{s=1}^n b_{s1} \varepsilon_s) \wedge \cdot \cdot \cdot \wedge (e_n+ \sum_{s=1}^n b_{sn} \varepsilon_s)|}\\
&=M(Df(x_0))/|M(Df(x_0))|.
\end{align*}
Therefore  $\xi_f(x, f(x))=\xi(x,f(x))$ for $\mathcal{L}^n~ a.e. x\in \Omega$.

  If  $0<\theta_1<\theta_2<\theta_0$ with $\theta_i:= \arccos \frac{2s_i}{\sqrt{1+4s_i^2}}$, i=1,2.  Then there exists a  transformation $\Theta_{3}$ such that $\Theta_{3}(\Gamma_{u_{2},D_2}) = \Gamma_{u_{1},D_1}$. In order not to confuse matters, we write
$$\Theta_1:\begin{cases}
x=\cos \theta_1 x'- \sin \theta_1 y' \\
y=\sin \theta_1 x' +\cos \theta_1 y',
\end{cases}
\Theta_2:\begin{cases}
x=\cos \theta_2 x''- \sin \theta_2 y'' \\
y=\sin \theta_2 x'' +\cos \theta_i y'',
\end{cases}
\Theta_3:\begin{cases}
x'=\cos \theta_3 x''- \sin \theta_3 y'' \\
y'=\sin \theta_3 x'' +\cos \theta_3 y''.
\end{cases}$$
  Clearly, $\Theta_2= \Theta_1\circ \Theta_3$ and $\theta_2=\theta_1+\theta_3$.

 Let $H(x'')=(\cos \theta_3 I- \sin \theta_3 u_{2})(x'')$ where $x''\in D_{2}$. Some tedious manipulation yields that  there exists $l>0$ such that
 \begin{equation}\label{formlula03}
 \langle H(x''_1)-H(x''_2) ,x''_1-x''_2\rangle\geq l|x''_1-x''_2|^2
 \end{equation}
for any $x''_1, x''_2 \in D_2$.
 For any $x''_0$ such that $Du_{2}(x''_0)$ exists,
it follows from Lemma \ref{lem13} and (\ref{formlula03}) that
$$p^{T}DH(x''_0, H(x''_0))p = p^{T} (\cos \theta_3 I- \sin \theta_3 Du_{2}(x''_0)) p \geq l p^{T}p$$
for all $p \in \mathbb{R}^n$, which implies $\det(\cos \theta_3 I- \sin \theta_3 Du_{2}(x''_0))>0$.
Then an argument similar to the one as above shows that
$$G_{u_{1}} = \Theta_{3\sharp}G_{u_{2}}.$$
Hence
$$\Theta_{2\sharp }G_{u_{2}}=\Theta_{1 \sharp} \circ \Theta_{3\sharp }G_{u_{2}}=\Theta_{1\sharp }G_{u_{1}},$$
which completes the proof.
\end{proof}

\begin{definition}\label{DefG}
Let $F: \Omega\rightarrow P_0(\mathbb{R}^n)$ be a maximal semi-monotone map, we define the Cartesian current $G_F$ associated to $F$ as
$$G_{F}:=\Theta_{ \sharp} G_{u},$$
where $\Theta$, $u$ are given in Theorem \ref{thm32}.
\end{definition}
This quantity is well-defined since  $G_F$ is independent of the rotation transformations  and
the orientation of the current is consistent with the one defined in the class $\mathcal{A}^1(\Omega,\mathbb{R}^n)$.

\begin{proof}[\bf Proof of Theorem 1.1]
The integer multiplicity rectifiable  $n$-current $G_{\partial w}$ carried by $\Gamma_{\partial w,\Omega}$ is defined by Definition \ref{DefG}. For any open set $\Omega'\subset\subset\Omega$, $D':=\pi\circ \Theta(\Gamma_{\partial w,\Omega'})$ is bounded since $\partial w(\Omega')$ is bounded. Then a tedious computation implies that
 \begin{equation}\label{formlula04}
\mathcal{H}^n(\Gamma_{\partial w,\Omega'})=\mathbf{M}_{\Omega'\times\mathbb{R}^n}(G_{\partial w})=
\mathbf{M}_{D'\times\mathbb{R}^n}(G_u)=\int_{D'}|M(Du)|dx'.
 \end{equation}
So (\romannumeral1), (\romannumeral2) can be easily proved by Theorem \ref{thm32} and (\ref{formlula04}).

Given $ \eta (x,y) \in \mathcal{D}^{n-1}(\Omega \times \mathbb{R}^n)$, let $\Theta(D \times \mathbb{R}^n)=U$,
then $U \cap(\Omega \times \mathbb{R}^n)$ is an open set in $\Omega \times \mathbb{R}^n$.
And thus $\mbox{spt}~\eta \cap \mbox{spt}~G_{\partial w}$ is compact in $U \cap (\Omega \times \mathbb{R}^n)$.
So there exists $\zeta \in C_0^{\infty}(U \cap (\Omega \times \mathbb{R}^n))$ such that
$\zeta=1$ in a neighbourhood of $\mbox{spt}~\eta \cap \mbox{spt}~ G_{\partial w}$.
Thus
$$\partial G_{\partial w}(\eta)= G_{\partial w}(d\eta) =G_{\partial w}(\zeta d\eta)=G_{\partial w}(d(\zeta\eta))=\partial G_{u}(\Theta^{\sharp}(\zeta \eta)) =0,$$
where the last equality follows from $\Theta^{\sharp}(\zeta \eta) \in \mathcal{D}^{n-1}(D \times \mathbb{R}^n)$.
So $\partial G_{\partial w} \llcorner \Omega \times \mathbb{R}^n =0$.
\end{proof}

\section{The proof of Theorem 1.2}

\begin{lemma}\label{lma41}
Let $u,  \{u_k\}_{k=1}^{\infty} \subset L(\Omega,\mathbb{R}^n)$ such that $Lip(u_k)$, $Lip(u)$ uniformly bounded and $u_k$ converge uniformly to $u$ in $\Omega$. Then
$$M_{\overline{\alpha}}^{\beta} (Du_k) \rightharpoonup M_{\overline{\alpha}}^{\beta} (Du)~~\mbox{in}~L^1(\Omega)$$
for any ordered multi-indices $\alpha, \beta$ with $|\alpha|+|\beta|=n$.
\end{lemma}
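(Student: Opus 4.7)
The plan is to argue by induction on the order $k=|\beta|$ of the minor, exploiting the divergence (null Lagrangian) structure of subdeterminants together with the uniform $L^{\infty}$ bounds coming from the uniform Lipschitz control.

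For $k=0$ the claim is trivial, and for $k=1$ the minor $M_{\overline{\alpha}}^{\beta}(Du_k)$ is a single partial derivative $D_{\overline{\alpha}_1} u_k^{\beta_1}$. Since $u_k \to u$ uniformly on $\Omega$, the distributional partials converge, and since $\{Du_k\}$ is uniformly bounded in $L^{\infty}$ by the Lipschitz hypothesis, Banach--Alaoglu and the density of $C_c^{\infty}(\Omega)$ in $L^{1}(\Omega)$ (on bounded subdomains, using a cutoff argument if $\Omega$ is only locally bounded) upgrade distributional convergence to weak $L^1$ convergence.

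For the inductive step, assume the weak $L^1$ convergence of all $(k-1)\times(k-1)$ minors and consider $M_{\overline{\alpha}}^{\beta}(Du_k)$. Expanding along the row indexed by $\beta_1$ via the Laplace formula recalled in Section 2 and invoking the Piola identity (each column of $\adj$ is divergence-free in the complementary indices), one rewrites the minor as a distributional divergence
\[
M_{\overline{\alpha}}^{\beta}(Du) = \sum_{j\in \overline{\alpha}} D_j\Bigl( u^{\beta_1}\,\sigma(\beta_1,\beta-\beta_1)\sigma(j,\overline{\alpha}-j)\,M_{\overline{\alpha}-j}^{\beta-\beta_1}(Du) \Bigr).
\]
For smooth $v$ this identity is a direct computation; for Lipschitz $u_k$ it is obtained by applying it to the mollifications $u_k \ast \rho_{\varepsilon}$, using the uniform $L^{\infty}$ bound on all minors to pass $\varepsilon \to 0^{+}$. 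Testing against an arbitrary $\varphi \in C_c^{\infty}(\Omega)$ and integrating by parts gives
\[
\int_{\Omega} M_{\overline{\alpha}}^{\beta}(Du_k)\,\varphi\,dx = -\sum_{j\in \overline{\alpha}} c_j \int_{\Omega} u_k^{\beta_1}\,M_{\overline{\alpha}-j}^{\beta-\beta_1}(Du_k)\,D_j\varphi\,dx,
\]
with fixed signs $c_j$. On the right-hand side, $u_k^{\beta_1}D_j\varphi \to u^{\beta_1}D_j\varphi$ strongly (in fact uniformly, by uniform convergence and compact support), while $M_{\overline{\alpha}-j}^{\beta-\beta_1}(Du_k) \rightharpoonup M_{\overline{\alpha}-j}^{\beta-\beta_1}(Du)$ weakly in $L^{1}$ by induction; the strong--weak product passes to the limit and yields the same identity with $u_k$ replaced by $u$. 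Since $\varphi$ was arbitrary and the sequence $M_{\overline{\alpha}}^{\beta}(Du_k)$ is equibounded in $L^{\infty}$, this is precisely the weak $L^1$ convergence to $M_{\overline{\alpha}}^{\beta}(Du)$.

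The main obstacle is the validity of the divergence (Piola) representation for merely Lipschitz maps, which is the mechanism that turns the nonlinear quantity $M_{\overline{\alpha}}^{\beta}(Du_k)$ into something compatible with weak convergence; it is handled by smooth approximation, where the uniform Lipschitz bound ensures that all subdeterminants remain uniformly bounded in $L^{\infty}$ through the mollification limit. Once this is in hand, the inductive step reduces to a standard strong--weak product passage that is crucially enabled by the strong (uniform) convergence of the components $u_k^{\beta_1}$ themselves.
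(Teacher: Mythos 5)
Your proof is correct and follows essentially the same route as the paper: induction on the order of the minor, the divergence-free (Piola) structure of the adjugate to rewrite the minor distributionally, integration by parts against $\varphi\in C_c^{\infty}(\Omega)$, and passage to the limit by combining uniform convergence of $u_k$ with the inductive weak $L^1$ convergence of the lower-order minors and the uniform $L^{\infty}$ bound. Your ``strong--weak product'' step is exactly the paper's splitting $\int u_k^i(\cdot)=\int u^i(\cdot)+\int(u^i-u_k^i)(\cdot)$, and your mollification justification of the Piola identity for Lipschitz maps only makes explicit a step the paper takes for granted.
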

\begin{proof}
Note that Laplace's formulas yield
\begin{align*}
M_{\overline{\alpha}}^{\beta}(Du) &= \sum_{j \in \overline{\alpha}} D_ju^i ((\adj Du)_{\overline{\alpha}}^{\beta})_j^i   \\
                           &=\sum_{j \in \overline{\alpha}}  \sigma(i,\beta-i)\sigma(j,\overline{\alpha}-j) M_{\overline{\alpha}-j}^{\beta-i}(Du) D_j u^i.
\end{align*}
$u$ is a Lipschitz function which implies
$$\sum_{j \in \overline{\alpha}} D_j((\adj D u)_{\overline{\alpha}}^{\beta})_j^i=0$$
in the sense of distribution. So for all  $\varphi \in C_c^{\infty}(\Omega)$,
$$\int_U M_{\overline{\alpha}}^{\beta}(Du) \varphi d x'=- \int_{U} u^i \sum_{j \in \overline{\alpha}} D_j\varphi((\adj D u)_{\overline{\alpha}}^{\beta})_j^i dx'.$$

Since  $Du_k$ are uniformly Lipschitz functions and  $u_k \rightrightarrows u $ in $\Omega$,
in order to prove the weak convergence of minors in  $L^1$  it suffices to show that for all  $\varphi \in C_c^{\infty}(\Omega)$.
\begin{equation}\label{5.2}
\int _U \varphi M_{\overline{\alpha}}^{\beta}(Du_k)dx' \rightarrow \int_U \varphi M_{\overline{\alpha}}^{\beta}(Du) dx'
\end{equation}

We shall now prove (\ref{5.2}) by induction on the order of the minors. Obviously it holds for  $l=1$ since   $u_k \rightrightarrows u $ in $U$. Suppose that it holds for  $l-1$. Clearly,
\begin{align*}
\int _U \varphi M_{\overline{\alpha}}^{\beta}(Du_k)dx' &= - \int_{U} u_k^i \sum_{j \in \overline{\alpha}} D_j\varphi((\adj D u_k)_{\overline{\alpha}}^{\beta})_j^i dx'  \\
                           &=- \int_{U} u^i \sum_{j \in \overline{\alpha}} D_j\varphi((\adj D u_k)_{\overline{\alpha}}^{\beta})_j^i dx'+ \int_{U} ( u_i-u_k^i) \sum_{j \in \overline{\alpha}} D_j\varphi((\adj D u_k)_{\overline{\alpha}}^{\beta})_j^i dx'.
\end{align*}
By the inductive assumption the first integral on the right tends to
$$- \int_{U} u^i \sum_{j \in \overline{\alpha}} D_j\varphi((\adj D u)_{\overline{\alpha}}^{\beta})_j^i dx',$$
which is equal to
$$\int_U \varphi M_{\overline{\alpha}}^{\beta}(Du) dx'.$$
While the second integral on the right tends to $0$, this proves (\ref{5.2}) for $l$ and therefore the theorem.
\end{proof}

\begin{proof}[\bf Proof of Theorem 1.2]
It suffices to prove the theorem in the case that $w_k$, $w$ are Lipschitz in $\Omega$.
 According to \cite[Lemma 2.3.]{AC} and \cite[Theorem B.3.1.4]{HJL}, it follows that $w_k, w$ can be extended
to be semi-convex  functions as $w^{\ast}_k, w^{\ast}$,
defined in  $\mathbb{R}^n$, such that $w^{\ast}_k\rightrightarrows w^{\ast}$ in $\mathbb{R}^n$.
Let $s>c$, and note that $(\cos \theta I+\sin \theta \partial w^{\ast}_k)(\mathbb{R}^n)=\mathbb{R}^n$ where $\cos \theta = \frac{2s}{\sqrt{1+4s^2}}$. then by the proof of Theorem \ref{thm32},
there exist a rotation transformation $\Theta$ and  Lipschitz functions $u_k: \mathbb{R}^n\rightarrow \mathbb{R}^n$ such that
$$\Theta_{\sharp} G_{u_k}=G_{\partial w^{\ast}_k},
~\mbox{Lip}(u_k)~\leq \max\{2s, \sqrt{\frac{4sc+1}{4s^2-4sc}}\},$$
where $k=0,1,2,...$ and $ w^{\ast}_0:=w^{\ast}$.

First, we need to  show that $u_k \rightrightarrows u$ in any compact $k\subset \mathbb{R}^n$. This result will follow from Arzela-Ascoli Theorem, if we can show that $u_k(x') \rightarrow u(x')$ for any $x' \in \mathbb{R}^n$.
Here we argue by contradiction, assume that there exists  $x' $ such that   $u_k(x')\nrightarrow u(x')$. Let
$$ x_k:=\cos \theta x'-\sin \theta u_k(x'),~~~~y_k:=\sin \theta x'+\cos \theta u_k(x').$$
Then $x'=\cos \theta x_k+\sin \theta y_k$.
and thus
$y_k-y_0=-2s(x_k-x_0)$. Note that $y_k+ cx_k  \in \partial v_k^{\ast}(x_k)$ where $v_k^{\ast}(x):=w_k^{\ast}(x)+c|x|^2$ are convex and Lipschitz in $\mathbb{R}^n$. Hence
 $$(2s-c)x_k=-(y_k+ cx_k)-2sx_0+y_0,$$
 which implies that both $x_k$ and $y_k$ are bounded.

 If  $x_k \nrightarrow x_0$, there exist  $\epsilon_0>0$ and a subsequence  $x_{\lambda_k}$ such that $|x_{\lambda_k}-x_0|\geq \epsilon_0$.
Since $x_{\lambda_k}$ and  $y_{\lambda_k}$  are bounded,
then there exists a subsequence   $x_{\mu_k}$ of  $x_{\mu_k}$  such that
$$x_{\mu_k} \rightarrow x_1,~y_{\mu_k} \rightarrow y_1.$$
By using Proposition \ref{pro2} (\romannumeral2),
 $$w_{\mu_k}^{\ast}(z) \geq w_{\mu_k}^{\ast}(x_{\mu_k})+\langle y_{\mu_k},z-x_{\mu_k}\rangle  - \frac{c}{2} |z-x_{\mu_k}|^2$$
for any  $z \in \mathbb{R}^n$.  Since $w^{\ast}_k\rightrightarrows w^{\ast}$ in $\mathbb{R}^n$, it follows that
$$w^{\ast}(z) \geq w^{\ast}(x_1)+\langle y_1, z-x_1\rangle  - \frac{c}{2} |z-x_1|^2,$$
which implies  $y_1 \in \partial w^{\ast}(x_1)$. So  $x'=\cos \theta x_1+\sin \theta y_1$, and then
\begin{align*}
0=|x'-x'|^2 &=(\cos \theta(x_1-x_0)+\sin \theta (y_1-y_0))^2  \\
                           &\geq \frac{1}{1+4s^2}(4s^2-4sc)(x_1-x_0)^2\\
                           &\geq 0.
\end{align*}
Hence  $x_1=x_0$ which contradicts the assumption that  $|x_{\mu_k}-x_0|\geq \epsilon_0$.

In order to prove the Theorem, according to the fact that $G_{\partial w^{\ast}_k}\llcorner \Omega \times \mathbb{R}^n=G_{\partial w_k}$,
 it is enough to show that  $G_{\partial w^{\ast}_k}\rightharpoonup G_{\partial w^{\ast}} ~\mbox{in}~ \mathcal{D}^n(\mathbb{R}^n \times \mathbb{R})$.
Since  $\Theta_{\sharp}G_{u_k}=G_{\partial w^{\ast}_k}$,
we need only to show that for any ordered multi-indices $\alpha, \beta$ with $|\alpha|+|\beta|=n$ and   $\varphi(x',y') \in C_c^{\infty}(\mathbb{R}^n \times \mathbb{R}^n)$
$$\int \varphi (x',u_k(x')) M_{\overline{\alpha}}^{\beta}(Du_k(x'))dx' \rightarrow \int \varphi(x',u(x')) M_{\overline{\alpha}}^{\beta}(Du(x')) dx',$$
which can be deduced by  Lemma \ref{lma41} and the fact that $u_k \rightrightarrows u$ in any compact $K\subset \mathbb{R}^n$.
\end{proof}
The following comes easily from the standard mollification of $w\in W(\Omega)$.
\begin{corollary}
If $w \in W(\Omega,c)$, then there exists a sequence $\{\omega_k\}^{\infty}_{k=1}\subset W(\Omega',c)\cap C^{\infty}(\Omega', \mathbb{R}^n)$ such that $G_{\partial w_k} \rightharpoonup G_{\partial w}$ in $\mathcal{D}^n(\Omega'\times \mathbb{R}^n)$ for any open set $\Omega'\subset\subset \Omega$. Moreover, it holds for $\Omega$ if $\omega$ is Lipschitz.
\end{corollary}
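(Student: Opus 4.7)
The plan is to mollify $w$ in the standard way and then invoke the weak continuity theorem (Theorem \ref{thm2}). Fix a standard symmetric mollifier $\rho_\epsilon \in C_c^\infty(\mathbb{R}^n)$ with $\rho_\epsilon \geq 0$, $\mathrm{spt}\, \rho_\epsilon \subset B_\epsilon(0)$ and $\int \rho_\epsilon = 1$, and for $\Omega'\subset\subset \Omega$ set $w_\epsilon := w * \rho_\epsilon$ on $\Omega'$, which is well defined for all sufficiently small $\epsilon > 0$. Clearly $w_\epsilon \in C^\infty(\Omega')$.

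The first step is to verify that $w_\epsilon \in W(\Omega',c)$ with the same modulus $c$. By definition, $v(x) := w(x) + \tfrac{c}{2}|x|^2$ is convex in $\Omega$; mollification preserves convexity, so $v_\epsilon := v*\rho_\epsilon$ is convex on $\Omega'$. Using the symmetry of $\rho_\epsilon$ (so that $\int y\, \rho_\epsilon(y)\, dy = 0$) together with $\int \rho_\epsilon = 1$, a direct expansion of $|x-y|^2$ gives $(|\cdot|^2 * \rho_\epsilon)(x) = |x|^2 + \kappa_\epsilon$ for the constant $\kappa_\epsilon := \int |y|^2 \rho_\epsilon(y)\,dy$. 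Hence $w_\epsilon(x) + \tfrac{c}{2}|x|^2 = v_\epsilon(x) - \tfrac{c}{2}\kappa_\epsilon$ is convex on $\Omega'$, which means $sc(w_\epsilon,\Omega')\leq c$ as required.

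Next I would establish the pointwise (actually locally uniform) convergence $w_\epsilon \to w$ as $\epsilon \to 0^+$. This is standard: by Proposition \ref{pro2}(i), $w$ is locally Lipschitz in $\Omega$, so on any compact set $K\subset \Omega'$ the convolutions $w_\epsilon$ converge uniformly to $w$, in particular pointwise on $\Omega'$. Combined with the semiconvexity bound from the previous step, Theorem \ref{thm2} applied in $\Omega'$ yields $G_{\partial w_\epsilon} \rightharpoonup G_{\partial w}$ in $\mathcal{D}^n(\Omega'\times \mathbb{R}^n)$. Passing to a sequence $\epsilon_k \to 0$ gives the desired $\{w_k\}$.

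For the final clause (convergence on all of $\Omega$ when $w$ is Lipschitz), the only obstacle is that the convolution $w*\rho_\epsilon$ is not a priori defined on the whole of $\Omega$. This is handled exactly as in the proof of Theorem \ref{thm2}: appeal to the cited extension results (\cite[Lemma 2.3]{AC}, \cite[Theorem B.3.1.4]{HJL}) to obtain a semi-convex extension $w^\ast \in W(\mathbb{R}^n,c)$ of $w$ which remains Lipschitz on $\mathbb{R}^n$. Mollify the extension to produce $w^\ast_\epsilon \in W(\mathbb{R}^n,c)\cap C^\infty(\mathbb{R}^n)$ as above; these are defined on all of $\Omega$, converge uniformly to $w^\ast$ on $\mathbb{R}^n$ (since $w^\ast$ is globally Lipschitz), and hence pointwise to $w$ on $\Omega$. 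Applying Theorem \ref{thm2} on $\Omega$ to the restrictions $w_k := w^\ast_{\epsilon_k}\big|_\Omega$ finishes the proof. The only nonroutine check in this program is the semiconvexity modulus computation in the first step; the rest reduces to standard mollification estimates plus Theorem \ref{thm2}.
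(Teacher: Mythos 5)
Your proposal is correct and matches the paper's intent exactly: the paper offers no detailed argument, remarking only that the corollary ``comes easily from the standard mollification of $w\in W(\Omega)$'' combined with Theorem \ref{thm2}, which is precisely the route you take (mollify, check $sc(w\ast\rho_\epsilon,\Omega')\leq c$, get locally uniform convergence, apply the weak continuity theorem, and use the semi-convex extension for the Lipschitz case on all of $\Omega$). Your verification that mollification preserves the semiconvexity modulus is the only nontrivial detail, and it is done correctly.
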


\begin{corollary}
If $w\in W(\Omega)$ and $n\geq 2$, then $G_{\partial \omega}$ is Lagrangian.
\end{corollary}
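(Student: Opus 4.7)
My plan is to reduce to the smooth case via the approximation corollary and use symmetry of the Hessian, exploiting the weak continuity provided by Theorem \ref{thm2}.

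First, I fix an arbitrary test form $\eta \in \mathcal{D}^{n-2}(\Omega \times \mathbb{R}^n)$ and aim to show that $G_{\partial w}(\phi \wedge \eta) = 0$, where $\phi = \sum_{i=1}^n dx^i \wedge dy^i$. Since $\eta$ has compact support, I can choose an open set $\Omega' \subset\subset \Omega$ such that the projection of $\mathrm{spt}(\eta)$ onto $\Omega$ lies inside $\Omega'$; consequently, $\phi \wedge \eta$ is a valid test form in $\mathcal{D}^n(\Omega' \times \mathbb{R}^n)$. Because $w \in W(\Omega) \subset W(\Omega', c')$ for some $c'$ depending on $\Omega'$, the preceding Corollary yields a sequence $\{w_k\} \subset W(\Omega', c') \cap C^\infty(\Omega')$ with $G_{\partial w_k} \rightharpoonup G_{\partial w}$ in $\mathcal{D}^n(\Omega' \times \mathbb{R}^n)$.

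Next, for each smooth $w_k$, Theorem \ref{thm1}(ii) gives $G_{\partial w_k} = G_{Dw_k}$, i.e., the integration current over the graph of the gradient map $F_k(x) := (x, Dw_k(x))$. Computing the pullback,
\begin{equation*}
F_k^* \phi = \sum_{i=1}^n dx^i \wedge d(\partial_i w_k) = \sum_{i,j=1}^n (\partial_j \partial_i w_k)\, dx^i \wedge dx^j,
\end{equation*}
which vanishes identically because $\partial_j \partial_i w_k$ is symmetric in $(i,j)$ while $dx^i \wedge dx^j$ is antisymmetric (here I use $n \geq 2$ implicitly, since for $n=1$ the form $\phi \wedge \eta$ is not even defined). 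Therefore
\begin{equation*}
G_{Dw_k}(\phi \wedge \eta) = \int_{\Omega'} F_k^*(\phi \wedge \eta) = \int_{\Omega'} (F_k^* \phi) \wedge (F_k^* \eta) = 0.
\end{equation*}

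Finally, passing to the limit using the weak convergence $G_{\partial w_k} \rightharpoonup G_{\partial w}$ in $\mathcal{D}^n(\Omega' \times \mathbb{R}^n)$,
\begin{equation*}
G_{\partial w}(\phi \wedge \eta) = \lim_{k \to \infty} G_{\partial w_k}(\phi \wedge \eta) = 0.
\end{equation*}
Since $\eta$ was arbitrary, $G_{\partial w}$ is Lagrangian. No essential obstacle arises: the symmetry of the Hessian in the smooth case is immediate, and the only care required is to choose $\Omega'$ containing the $x$-support of the test form, so that the local approximation corollary applies. One could alternatively verify Lagrangianness pointwise on the approximate tangent spaces (using the a.e.\ second-differentiability of $w$ and its symmetric Alexandrov Hessian), but the approximation route is cleaner.
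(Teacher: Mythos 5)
Your proof is correct and follows essentially the same route as the paper: approximate by smooth semi-convex functions via the preceding corollary, kill $\phi$ on the smooth graphs using the symmetry of $D^2 w_k$, and pass to the limit by the weak convergence $G_{\partial w_k}\rightharpoonup G_{\partial w}$. The only difference is cosmetic: you localize through the compact support of the test form $\eta$ on $\Omega'\subset\subset\Omega$, whereas the paper first reduces to the case that $w$ is Lipschitz on $\Omega$ and uses the global version of the approximation corollary.
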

\begin{proof}
It suffices to prove the theorem in the case that  $\omega$ is Lipschitz in $\Omega$. Then there exists a sequence $\{\omega_k\}^{\infty}_{k=1}\subset W(\Omega)\cap C^{\infty}(\Omega, \mathbb{R}^n)$ such that $G_{\partial w_k} \rightharpoonup G_{\partial w}$. For any  $\eta \in \mathcal{D}_{n-2}(\Omega\times \mathbb{R}^n)$, since $D^2 \omega_k=(D^2 \omega_k)^{T}$,
$$G_{\partial \omega_k}(\phi \wedge \eta)=0,$$
where $\phi:=\sum_{i=1}^{\infty}dx^i \wedge dy^i$. Which proves $G_{\partial \omega}(\phi \wedge \eta)=0$ and therefore the theorem.
\end{proof}

\section{The proof of Theorem \ref{thm4}.}
 \begin{proof}
 Without loss of generality, we may assume that $w$ is Lipschitz in $\Omega$.
Given  $s>c$, there exist a rotation transformation $\Theta$ and a Lipschitz function $u: D\rightarrow \mathbb{R}^n$ such that $\Theta_{\sharp} G_{u}=G_{\partial w}$,
where $\theta:= \arccos \frac{2s}{\sqrt{1+4s^2}}$. We denote   $g_1(x,y)=\cos \theta x+\sin \theta y$,  $f_2(x',y')=\sin \theta x'+\cos \theta y'$, and $D_B= \{\cos \theta x + \sin \theta y \mid x \in B, y \in \partial w(x)\}$, where $B$ is a Borel subset in $\Omega$.

 On the one hand,
 $$g_{1 \sharp} G_{\partial w} =g_{1 \sharp} \Theta_{ \sharp} G_{ u} =\pi_{\sharp}G_{ u}=\textbf{[}D \textbf{]}.$$
 On the other hand,  for any  $\varphi \in C_c^{\infty}(D)$
  \begin{align*}
g_{1 \sharp} G_{\partial w} (\varphi(x')dx')&= G_{\partial w}(\varphi \circ g_1(x,y)dg_1)\\
                           &=G_{\partial w} (\varphi \circ g_1(x,y) \sum \sigma(\alpha,\overline{\alpha}) \cos^{|\alpha|} \theta \sin^{|\overline{\alpha}|} \theta dx^{\alpha} \wedge dy^{\overline{\alpha}} )\\
                           &=\sum_{i=0}^{n} \cos^{i}\theta \sin^{n-i} \theta \sum _{|\alpha|=i} \sigma (\alpha,\overline{\alpha}) G_{\partial w}^{\alpha \overline{\alpha}} (\varphi \circ g_1 (x,y)).
\end{align*}
Hence
 $$ \textbf{[}D \textbf{]}(\varphi(x') dx' )= \sum_{i=0}^{n} \cos^{i} \theta \sin^{n-i} \theta \sum _{|\alpha|=i} \sigma (\alpha, \overline{\alpha}) G_{\partial w}^{\alpha \overline{\alpha}} (\varphi \circ g_1 (x,y)).$$
Next, we have to show that  for any ordered multi-indices  $\alpha$,
$$\lim_{\epsilon \rightarrow 0} G_{\partial w}^{\alpha \overline{\alpha} }(\varphi_{\epsilon} \circ g_1(x,y))=\int _{\Gamma_{\partial v,B}} \xi^{\alpha \overline{\alpha}}(x,y) d\mathcal{H}^n (x,y),$$
 where  $\varphi_{\epsilon}= \phi_{\epsilon} \ast \chi_{D_B}$. Clearly, $\varphi_{\epsilon} \in C_c^{\infty}(D)$ and  $\varphi_{\epsilon} \rightarrow \varphi:=\chi_{D_B}$  a.e., as  $\epsilon \rightarrow 0$.

 In order to prove the claim it suffices to show that
 $$\nu_{\epsilon}(x,y) \rightarrow \nu(x,y)~~~\mathcal{H}^n~\mbox{a.e.}~(x,y)\in \mathbb{R}^{2n}, $$
 where $\nu_{\epsilon}(x,y)=\chi_{\Gamma_{\partial w,\Omega}} \varphi_{\epsilon}(\cos \theta x+ \sin \theta y ) \xi^{\overline{\alpha} \alpha } (x,y)$ and
$\nu(x,y)=\chi_{\Gamma_{\partial w,\Omega}} \varphi(\cos \theta x+ \sin \theta y ) \xi^{\overline{\alpha} \alpha} (x,y)$.

If  $(x,y) \notin \Gamma_{\partial w,\Omega}$, then $\nu_{\epsilon}(x,y)=0=\nu(x,y)$.
If  $(x,y) \in \Gamma_{\partial w,\Omega}$, there exists $S\subset \Gamma_{\partial w,\Omega}$
such that $\mathcal{H}^n(S)>0$  and  $\nu_{\epsilon}(x,y) \nrightarrow \nu(x,y)$ for any  $(x,y)\in S$.
Let  $S'=g_1(S)$, then
$$\mathcal{H}^n(S)=\mathcal{H}^n (\Gamma_{u,S'})=\int_{S'}|M(Du)| dx'>0.$$
Therefore $\mathcal{H}^n(S')>0$,  which contradicts the assumption that $\varphi_{\epsilon} \rightarrow \varphi $ a.e. Hence the desired result is obtained by the dominated convergence theorem. So
\begin{align*}
\mathcal{L}^n(D_B)       &=\lim_{\epsilon \rightarrow 0} \textbf{[}D \textbf{]}(\varphi_{\epsilon}(x') dx' ) \\
                           &= \lim_{\epsilon \rightarrow 0} \sum_{i=0}^{n} \cos^{i}\theta \sin^{n-i} \theta \sum _{|\alpha|=i} \sigma (\alpha,\overline{\alpha}) G_{\partial w}^{\alpha \overline{\alpha} } (\varphi_{\epsilon} \circ g_1 (x,y)) \\
                           &=\sum_{i=0}^{n} \cos^{i} \theta \sin^{n-i} \theta \sum _{|\alpha|=i} \sigma (\alpha,\overline{\alpha}) \int _{\Gamma_{\partial w,B}} \xi^{\alpha \overline{\alpha}}(x,y) d\mathcal{H}^n (x,y).
\end{align*}
By the Steiner formula for semi-convex in \cite{AP} we have
    $$F_k(w,B)= \sum_{|\alpha|=k} \sigma(\alpha,\overline{\alpha}) \int _{\Gamma_{\partial w,B}} \xi^{\alpha \overline{\alpha}}(x,y) d\mathcal{H}^n (x,y).$$
 In particular,
 \begin{align*}
F_0(w,B) &=\lim_{\epsilon \rightarrow 0} G_{\partial w}^{ \overline{0} 0}(\varphi_{\epsilon} \circ g_1(x,y)) \\
                           &=\lim_{\epsilon \rightarrow 0} \textbf{[}D \textbf{]}(\varphi_{\epsilon}(x') d(\cos \theta x'- \sin \theta u(x')) ) \\
                           &=\int_{D_B} \det(\cos \theta I- \sin \theta Du) d x' \\
                           &=\mathcal{L}^n(B).
\end{align*}
 If $w$ is convex,
 \begin{align*}
F_n(w,B) &=\lim_{\epsilon \rightarrow 0} G_{\partial w}^{ 0 \overline{0}}(\varphi_{\epsilon} \circ g_1(x,y)) \\
                           &=\lim_{\epsilon \rightarrow 0} \textbf{[}D \textbf{]}(\varphi_{\epsilon}(x') d(\sin \theta x'+ \cos \theta u(x')) ) \\
                           &=\int_{D_B} \mbox{det}(\sin \theta I+ \cos \theta Du) d x' \\
                           &=\int_{f_2(D_B)} \mathcal{H}^0(D_B \cap f_2^{-1}(y)) dy,
\end{align*}
where the last equality is deduced by area formula and the fact $\det (\sin \theta I+ \cos \theta Du) \geq0$.
Let  $P:=\{y \in f_2(D_B) \mid \mathcal{H}^0(D_B \cap f_2^{-1}(y)) \neq 1\}$, and fix  $y \in P$.
Then there exist $x_1,x_2 \in B$ such that $x_1 \neq x_2$ and $y \in \partial w(x_1)\cap \partial w(x_2)$,
and hence   $\mathcal{H}^n(P)=0$ by the proof of Theorem 5.11 in \cite{AA}. Therefore
$$F_n(w,B)= \mathcal{L}^n (f_2(D))=\mathcal{L}^n (\partial w(B)),$$
which completes the proof.
\end{proof}
\begin{remark}
It should be observed that the measures $C_n^k F_k$ in the notation of Colesanti-Hug correspond to the Hessian measures $F_k$ in the notation of Trudinger-Wang, and in this paper  we denote $F_k$ in the same way as the latter.
\end{remark}


\section*{Acknowledgments}
\addcontentsline{toc}{chapter}{Acknowledgements}
This work is supported by NSF grant of China ( No.11131005, No.11301400) and the Fundamental Research Funds for the Central Universities(Grant No. 2014201020203).

\bibliographystyle{plain}

\end{document}